\documentclass[11pt]{amsart}

\usepackage[english]{babel}
\usepackage[utf8]{inputenc}
\usepackage[T1]{fontenc}

\usepackage{amsmath}
\usepackage{amssymb}
\usepackage{amsfonts}
\usepackage{amssymb}
\usepackage{amsthm}
\usepackage{amscd}

\usepackage[hidelinks]{hyperref}
\usepackage{geometry}

\newtheorem{definition}{Definition}[section]
\newtheorem{lemma}[definition]{Lemma}
\newtheorem{theorem}[definition]{Theorem}
\newtheorem{proposition}[definition]{Proposition}

\newcommand{\setsymbol}[1]{\ensuremath{\mathbb{#1}}}%
\newcommand{\R}{\setsymbol{R}}%
\newcommand{\Sphere}{\setsymbol{S}}%

\DeclareMathOperator{\divA}{div}
\DeclareMathOperator{\trace}{tr}
\DeclareMathOperator{\seg}{seg}
\DeclareMathOperator{\id}{id}

\DeclareMathOperator{\dist}{dist}
\DeclareMathOperator{\sn}{sn}
\DeclareMathOperator{\cs}{cs}
\DeclareMathOperator{\Ric}{Ric}

\author{Florian Johne}
\address{Mathematisches Institut, Universität Freiburg, Ernst-Zermelo Str. 1, 79104 Freiburg, Germany}
\email{florian.johne@math.uni-freiburg.de}

\author{Lauro Silini}
\address{Institute of Science and Technology Austria (ISTA), Am Campus 1, 3400 Klosterneuburg,
Austria}
\email{Lauro.Silini@ist.ac.at}

\title{Alexandrov Theorem for constant weighted mean curvature surfaces}

\begin{document}

\begin{abstract}
 We prove a Heintze--Karcher type inequality for a large class of log-convex weights
 in Euclidean and Hyperbolic spaces. As a consequence we obtain an Alexandrov theorem for
 $\lambda$-self expanders.
\end{abstract}

 \maketitle

\section{Introduction}

Among the hypersurfaces of Euclidean space the flat plane and the round sphere
are the simplest examples. In particular, they both have constant mean curvature
and are totally umbilic. It is classical that complete totally umbilic hypersurfaces
are either flat planes or round spheres. A more intriguing question is whether 
closed hypersurfaces with constant mean curvature (CMC) are round spheres?

A first result in this direction was proved in 1853 by J.-H.~Jellett \cite{Jellett-1853-Classification-CMC-Sharshaped}, who used integral formulas (later named after H.~Minkowski)
to prove that any star-shaped CMC surface is a round sphere. 
In 1900 H.~Liebmann \cite{Liebmann-1900-Classification-CMC-Convex} proved that surfaces in $\R^3$ with constant mean curvature
and positive Gauß curvature (or surfaces with constant positive Gauß curvature)
are round spheres.

The classification problem for CMC surfaces received renewed interest in 1951, when H.~Hopf \cite{Hopf-1951-HopfDifferential} used a quadratic holomorphic differential to prove that any CMC immersion of genus zero into $\R^3$ is a round sphere, and asked in his 1956 lectures on \emph{Differential Geometry in the Large} whether the round sphere is the unique CMC immersed surface \cite[p.~131]{Hopf-LectureNotes-DifferentialGeometryInTheLarge}. The ingenious invention of the Moving Plane Method allowed A.~D.~Aleksandrov in 1956 \cite{Aleksandrov-1956-CMC-Classification} to prove that any embedded CMC hypersurface in $\R^{n+1}$ is a round sphere.

It came as a surprise when W.-Y.~Hsiang \cite{Hsiang-1982-CMC-Examples-HigherDimension} and W.-Y~Hsiang, Z.-H.~Teng
and W.-C.~Yu \cite{Hsiang-Ten-Yu-1982-CMC-Examples-HigherDimensions} constructed immersed CMC surfaces different from the round sphere
in $\R^n$ for $n \geq 4$.
Finally, in 1986 H.~Wente \cite{Wente-1986-WenteTorus} constructed a CMC torus in $\R^3$, therefore giving a negative answer to the above question by H.~Hopf.

Moreover, it is a folklore result that J.~Simons' identity \cite{Simons-1968-MinimalVarieties}
implies that that convex CMC surfaces are round spheres. A different proof 
of Alexandrov's Theorem was given in 1977 by R.~Reilly \cite{Reilly-1977-IntegralFormula} using the integral formula bearing his name. A.~Ros \cite{Ros-1987-Reilly, Ros-1988-Reilly} used the Reilly formula to obtain a Heintze--Karcher type inequality, in conjunction with the Hsiang--Minkowski formulas this implies a Alexandrov Theorem for the higher order mean curvatures.

The classification of embedded CMC hypersurfaces is also intriguing in other ambient spaces:
While there is an abundance of examples of embedded CMC hypersurfaces in the sphere,
there is an analogue of Alexandrovs Theorem for the hemi-sphere and for hyperbolic 
space. The proof by the method of moving planes by A.~D.~Aleksandrov \cite{Aleksandrov-1956-CMC-Classification} extends to this setting, however  S.~Montiel and A.~Ros \cite{Montiel-Ros-1991-Alexandrov} provided an alternative proof combining the Minkowski formula
with a variant of the Heintze--Karcher inequality (with a different proof
compared to earlier work by A.~Ros). In a milestone S.~Brendle \cite{Brendle-2013-Alexandrov-WarpedProducts}
extended the classification of embedded CMC surfaces to a large class of warped products,
in particular the Schwarzschild manifold, by combining a weighted Minkowski formula
with a weighted Heintze--Karcher inequality, which in turn was proved by deforming
the hypersurfaces by parallel surfaces with respect to a conformal metric.
Heintze--Karcher inequalities in substatic manifolds have also been proven by J.~Li and C.~Xia \cite{Li-Xia-2019-IntegralFormula-SubstaticManifolds}, and
by M.~Fogagnolo and A.~Pinamonti \cite{Fogagnolo-Pinamonti-2022-IntegralEstimates-SubstaticManifolds}, see also their
joint work with S.~Borghini \cite{Borghini-Fogagnolo-Pinamonti}
on the equality case.

In recent years variational problems with respect to weighted volume and weighted area
have received increased interest. Given a smooth weight $\rho: \R^{n+1} \rightarrow (0,\infty)$ we consider the weighted area $A_{\rho}(\Sigma) = \int_{\Sigma} \rho \; d\sigma$.
The associated first variation is given by
\begin{equation}
 \left. \frac{d}{dt} A_{\rho}(\Sigma_t) \right|_{t=0} 
 = \int_{\Sigma} f (H + \langle D \log \rho, \nu \rangle) \;  \rho \; d\sigma.
\end{equation}
and the associated second variation on a critical point is given by
\begin{equation}
\begin{aligned}
    &\left. \frac{d^2}{dt^2} A_{\rho}(\Sigma_t) \right|_{t=0}  \\
   &=
     \int_{\Sigma}
   \rho \left( -f \Delta_{\Sigma} f - 
   \left( |h_{\Sigma}|^2 + \Ric_N(\nu, \nu) \right) f^2
   + f^2 (D_N^2 \log \rho)(\nu, \nu)
   - f
   \langle D_{\Sigma} \log \rho, D_{\Sigma} f \rangle
   \right) 
   \rho \, d\sigma,
   \end{aligned}
\end{equation}
where $(\Sigma_t)_{t \in (-\epsilon, \epsilon)}$ is a variation of $\Sigma$
with normal velocity $f \in C^{\infty}_c(\Sigma)$,
compare \cite{Brendle-Hirsch-J-2024-GeneralizedGeroch}.
This motivates the definition of the weighted mean curvature given by
$H_{\rho} = H + \langle D \log \rho, \nu \rangle$.
Surfaces of constant weighted mean curvature (CWMC surfaces) satisfy $H_{\rho} = \lambda$ for some $\lambda \in \R$.

An example are self-similarly shrinking solutions to mean curvature flow:
These hypersurfaces are characterized by the equation $H = \frac{1}{2} \langle x, \nu \rangle$,
one may view these objects as weighted minimal surfaces with respect to
the weighted area $A_S(\Sigma) = \int_{\Sigma} \exp\left(- \alpha |x|^2 \right) \; d\sigma$,
where $\alpha > 0$.
Results by G.~Huisken \cite{Huisken-1990-MonotonicityFormula}, A.~Stone \cite{Stone-1994-MCF-Singularities},
and T.~Colding and W.~Minicozzi \cite{Colding-Minicozzi-2012-GenericMCF} yield a 
classification result: complete mean convex self-similarly
shrinking solutions to mean curvature flow with polynomial volume growth
are given by cylinders $\Sphere^{n-k} \times \R^k \subseteq \R^{n+1}$ for $0 \leq k \leq n-1$; these proofs rely on Simons' identity.

Another important class of examples are radially symmetric and log-convex densities.
For such a density centered spheres are stable critical points for the weighted area.
In fact, among radially symmetric densities centered spheres are stable
if and only if the weight is log-convex.
In an important result G.~Chambers \cite{Chambers-2019-LogConvexDensityConjecture}
showed that the solutions of the isoperimetric problem
for radially symmetric and log-convex densities
are given by balls centered at the orgin.

The most important example of a radially symmetric log-convex weight
is the expander weight $\rho(x) = \exp(\alpha |x|^2)$ for $\alpha > 0$.
The weighted mean curvature associated to the expander weight is given by $H_{\rho} = H + 2 \alpha \langle x, \nu \rangle$. In the literature there are several conventions for the signs
in the previous equation. Hypersurfaces with zero weighted mean curvature are
the self-expanders to mean curvature flow. 
Examples of hypersurfaces with constant positive weighted mean curvature are given
by spheres centered at the origin, and planes through the origin.

Previously S.~Ancari and X.~Cheng \cite{Ancari-Cheng-2023-Properties-SelfExpanders}
studied the rigidity properties of $\lambda$-self expanders:
among other results they showed a rigidity theorem under a pinching condition
on the second fundamental form \cite[Theorem 1.1]{Ancari-Cheng-2023-Properties-SelfExpanders} and a rigidity theorem for convex $\lambda$-self expanders \cite[Theorem 1.3]{Ancari-Cheng-2023-Properties-SelfExpanders}.

In this work we establish an Alexandrov theorem for embedded $\lambda$-self expanders.

\begin{theorem}[Alexandrov Theorem for expander weight] \ \\
 Let $n \geq 2$ and $\alpha > 0$. Suppose $\Sigma^n$ is a closed, connected and embedded submanifold in expander space $(\R^{n+1} , \delta, \exp(\alpha |x|^2) \; d\lambda)$ with
 constant weighted mean curvature $H_{\rho} \neq 0$.
 Then $\Sigma$ is a round sphere centered at the origin.
 \label{Theorem:AlexandrovTheorem-ExpanderWeight}
\end{theorem}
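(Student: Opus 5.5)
The plan follows the Montiel--Ros/Brendle approach: combine a weighted Minkowski formula with a weighted Heintze--Karcher inequality, and read off rigidity from the equality case. Let $\Omega$ be the bounded region enclosed by $\Sigma$ and $\nu$ the outward normal. Write $\rho = e^{\alpha|x|^2}$, so $D\log\rho = 2\alpha x$ and $H_\rho = H + 2\alpha\langle x,\nu\rangle = \lambda \neq 0$.

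\textbf{Step 1 (sign of $\lambda$ and Minkowski-type identity).} First I show $\lambda>0$. At a point of $\Sigma$ farthest from the origin we have $H \geq n/R>0$ and $\langle x,\nu\rangle = R>0$, so $\lambda>0$. Next I integrate the tangential divergence of $x^T$ with the weight $\rho$. We have $\operatorname{div}_\Sigma x = n$, and $\operatorname{div}_\Sigma(\rho x^T) = \rho(n - H\langle x,\nu\rangle + 2\alpha |x^T|^2)$. This gives
\[
\int_\Sigma \rho\bigl(n - H\langle x,\nu\rangle + 2\alpha|x^T|^2\bigr)\,d\sigma = 0 .
\]
Substituting $H = \lambda - 2\alpha\langle x,\nu\rangle$ and using $|x^T|^2 + \langle x,\nu\rangle^2 = |x|^2$ yields
\[
\int_\Sigma \rho\bigl(n + 2\alpha|x|^2\bigr)\,d\sigma = \lambda\int_\Sigma \rho\langle x,\nu\rangle\,d\sigma .
\]
The right-hand side can also be written as $\lambda\int_\Omega \operatorname{div}(\rho x)\,dx = \lambda\int_\Omega \rho(n+1+2\alpha|x|^2)\,dx$.

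\textbf{Step 2 (weighted Heintze--Karcher inequality).} This is the main obstacle. For log-convex radial weights the target inequality is
\[
\int_\Sigma \frac{n\,\rho}{H_\rho}\,d\sigma \;\geq\; \int_\Omega (\text{suitable weighted volume density})\,dx,
\]
with equality only for centered spheres. The density must be chosen so that combining it with Step 1 forces equality. For a centered sphere of radius $R$ it should reproduce $\frac{n}{\lambda}\rho|\Sigma| = \ldots$.

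I would prove it Montiel--Ros style, via the normal exponential map $\Phi(x,t) = x - t\nu(x)$ and a modified parallel flow. The idea is to flow along normal geodesics of a conformal metric $\tilde g = \rho^{2/n}\delta$. In this metric the weighted area becomes an honest area, and the condition $H_\rho>0$ makes $\Sigma$ mean convex (this is Brendle's trick). The curvature of the conformal metric enters through the Riccati comparison, and log-convexity of $\rho$ is what gives the needed sign. I then compute the Jacobian along the flow, bound it by the AM--GM/Riccati estimate, and integrate up to the cut locus to recover $\Omega$.

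Alternatively, I could use a Reilly-formula proof: solve a weighted Dirichlet problem $\Delta f + \langle D\log\rho, Df\rangle = 1$ in $\Omega$, $f=0$ on $\Sigma$, and apply the weighted Reilly formula with Bakry--Émery Ricci tensor $D^2(-\log\rho)$. That sign is bad, however, which is why I expect the conformal approach with log-convexity to be the necessary route.

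\textbf{Step 3 (conclusion).} Since $H_\rho = \lambda$ is constant, the Heintze--Karcher inequality reads $\frac{n}{\lambda}\int_\Sigma\rho \geq \int_\Omega(\cdots)$. Step 1 combined with Cauchy--Schwarz-type comparisons (using $n + 2\alpha|x|^2 \geq n$) gives the reverse inequality. Hence equality holds, so the equality case of Step 2 forces $\Sigma$ to be umbilic with $x$ parallel to $\nu$. Therefore $\Sigma$ is a round sphere centered at the origin.
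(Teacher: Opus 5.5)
\textbf{Step 1 is fine; the gap is in Steps 2--3.} Your Step 1 is correct and is exactly the paper's weighted Minkowski identity. Your farthest-point argument for $\lambda>0$ also fills in a point the paper leaves implicit. The whole difficulty sits in Steps 2--3, and as written they cannot work.

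\textbf{The numerator is wrong.} With boundary side $\int_\Sigma n\rho/H_\rho\,d\sigma$, you obtain the reverse inequality from Step 1 plus $n\le n+2\alpha|x|^2$. That comparison is strict on $\Sigma$ away from the origin, so after integration it is strict, and your chain can never collapse to equality. Concretely, take the volume density $(n+1+2\alpha|x|^2)\rho$, which is the only one Step 1 produces. Your Heintze--Karcher inequality then already fails on the centered sphere of radius $R$, where it reduces to $n\ge n+2\alpha R^2$. The inequality that matches the Minkowski identity without loss is the paper's
\[
\int_\Omega (n+1+2\alpha|x|^2)\,dx_\rho \le \int_\Sigma \frac{n+2\alpha|x|^2}{H_\rho}\,d\sigma_\rho ,
\]
with numerator $\phi=n+2\alpha|x|^2$. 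When $H_\rho=\lambda$, Step 1 says the right side equals the left side, so equality is forced.

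\textbf{The proposed mechanism also fails.} The metric $\rho^{2/n}\delta$ does turn $H_\rho>0$ into mean convexity. However, the conformal-change formula gives its Ricci tensor as $-4\alpha\,\delta-\frac{4\alpha^2(n-1)}{n^2}(|x|^2\delta-x\otimes x)$. This is negative definite precisely because $\alpha>0$, so Riccati comparison along its normal geodesics goes the wrong way.

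\textbf{What the paper does instead.} Following Brendle, with $\phi$ in the role of the static potential, it uses normal geodesics of $g=\phi^{-2}\delta$. Their level sets move inward with Euclidean normal speed $\phi$, and satisfy $\partial_t H_\rho=\Delta^\Sigma_\rho\phi+(|h|^2-2\alpha)\phi$. Two ingredients you are missing close the argument:
\begin{enumerate}
\item the identity $\Delta^\Sigma_\rho\phi=4\alpha(\phi-H_\rho\langle x,\nu\rangle)$;
\item the expander trace inequality $(|h|^2+2\alpha)\phi\ge H_\rho^2$, with equality if and only if $\Sigma$ is a centered sphere. This is where $\alpha>0$ enters.
\end{enumerate}
Together they give $\partial_t H_\rho\ge H_\rho^2-4\alpha H_\rho\langle x,\nu\rangle$. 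Hence
\[
\frac{d}{dt}\int_{\Sigma_t}\frac{\phi}{H_\rho}\,d\sigma_\rho\le-\int_{\Sigma_t}(n+1+2\alpha|x|^2)\phi\,d\sigma_\rho .
\]
The coarea formula, with $|Du|_\delta=\phi^{-1}$ for $u=d_g(\cdot,\Sigma)$, integrates this to the displayed inequality. Its equality case is the equality case of the trace inequality, and that is what yields $\overset{\circ}{h}=0$ and $x^{\parallel}=0$.
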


We remark that the situation is very different in the self-shrinking case, i.e. for the weight $\rho(x) = \exp(-\alpha |x|^2)$ for $\alpha > 0$. In that case
the problem is more flexible, and Q.-M.~Cheng and G.~Wei \cite{Cheng-Wei-2021-Lambda-Shrinkers-Examples} first
constructed for $n \geq 2$ embedded examples with topology $\Sphere^1 \times \Sphere^{n-1}$,
and later together with J.~Lai \cite{Cheng-Lai-Wei-2024-Lambda-Shrinkers-Examples} embedded convex non-round examples with the topology of the sphere.

Our proof follows a classical idea for the proof of Alexandrov's Theorem:
We combine a weighted Minkowski formula (Lemma \ref{Lemma:WeightedMinkowskiFormula}) with a weighted Heintze--Karcher inequality in expander space.
For the proof of the weighted Heintze--Karcher inequality (Theorem \ref{Theorem:HeintzeKarcherInequality}) we use parallel surfaces
with respect to a conformal metric, an idea 
which appeared in work of S.~Brendle on Alexandrov's Theorem 
in warped product spaces \cite[Section 3]{Brendle-2013-Alexandrov-WarpedProducts}.
A crucial ingredient in the proof of the weighted Heintze--Karcher inequality is a generalization of the classical trace inequality
relating the norm of the second fundamental form and the mean curvature (cf.\ Lemma \ref{Lemma:TraceInequality}); this new inequality detects 
centered spheres.

We remark that different weighted Heintze--Karcher inequalities
have previously been proved by F.~Morgan \cite{Morgan-2005-Densities} and by 
M.~Batista, M.~P.~Cavalcanta and J.~Pyo \cite{Batista-Cavalcanta-Pyo-2014-WeightedHeintzeKarcher}. 

In fact, the above method of proof generalizes to a much wider setting:
Euclidean space $(\R^{n+1}, \delta)$ may be replaced by hyperbolic space with nonpositive curvature $\kappa \leq 0$; and log-quadratic weight might be replaced
by a class of weights satisfying a differential inequality.

\begin{theorem}[Alexandrov Theorem in nonpositive curvature with weights] \ \\
 Let $n \geq 2$, $\kappa \leq 0$ and let $\rho: M^{n+1}_{\kappa} \rightarrow (0,\infty)$
 be a nonconstant positive radially symmetric weight (with respect to the origin $o \in M^{n+1}_{\kappa}$), i.e.\ there exists $\psi: (0,\infty) \rightarrow \R$, such that $\rho(x) = \exp(\psi(r(x)))$. Moreover, assume that $\psi$ additionally satisfies
 for all $r >0$ the inequalities $\psi'(r) \geq 0$ and 
 \begin{equation}
 A_{\kappa}(r) =  \frac{d}{dr} \left( \psi''(r) + \frac{(\psi'(r))^2}{2} + (n+1) \frac{\cs_{\kappa}(r) \psi'(r)}{\sn_{\kappa}(r)} + \kappa n \psi(r) 
  \right) \geq 0.
  \label{equation:Monotonicity}
 \end{equation}
Suppose $\Sigma^n$ is a closed, connected and embedded submanifold
in the weighted manifold $(M^{n+1}_{\kappa}, \rho \, dx)$ with 
constant wegithed mean curvature $H_{\rho} \neq 0$.
Then $\Sigma$ is a geodesic sphere centered at the origin $o \in M^{n+1}_{\kappa}$.
\label{Theorem:AlexandrovTheorem-GeneralWeight}
\end{theorem}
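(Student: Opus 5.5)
The argument combines a weighted Minkowski formula with a weighted Heintze--Karcher inequality, as in Brendle's proof for warped products \cite{Brendle-2013-Alexandrov-WarpedProducts}. Write $\Omega$ for the compact region bounded by the embedded hypersurface $\Sigma$, and $\nu$ for the outward normal. Let $X=\sn_\kappa(r)\partial_r$ be the conformal Killing field, so that $DX = \cs_\kappa(r)\, g$. Set $V=\cs_\kappa(r)$; this is the static potential, satisfying $D^2 V = -\kappa V g$.

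\emph{Step 1: weighted Minkowski formula.} Compute $\divA_\Sigma(\rho X^T)$ on $\Sigma$. Radial symmetry gives $\langle D\log\rho, X\rangle = \psi'\sn_\kappa\ge 0$. Using $H_\rho=\lambda$ one should obtain an identity of the form
\[
\int_\Sigma \rho\,\bigl(n V - \lambda \langle X,\nu\rangle\bigr)\,d\sigma \;=\; -\int_\Sigma \rho\, \psi'(r)\sn_\kappa(r)\,\bigl(1-\langle\partial_r,\nu\rangle^2\bigr)\,d\sigma \;+\;\cdots,
\]
where the remaining terms are controlled by $\psi'\ge0$. This is an inequality that becomes an equality exactly when $\nu=\partial_r$ wherever $\psi'\neq0$. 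I would choose the exact weighting so that it matches the Heintze--Karcher bound in Step 2.

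\emph{Step 2: weighted Heintze--Karcher inequality.} I would aim to prove, for mean convex $\Sigma$ (that is, $H_\rho>0$),
\[
\int_\Sigma \frac{n V \rho}{H_\rho}\,d\sigma \;\ge\; (n+1)\int_\Omega V\rho\,dx \quad\text{(suitably weighted)},
\]
with equality only for centered geodesic spheres. The method is Brendle's: flow $\Sigma$ inward by parallel surfaces with respect to a conformal metric $\tilde g=\rho^{2/n}g$ or $V^{-2}g$ adapted to both $V$ and $\rho$. On each point $x\in\Sigma$ one follows the $\tilde g$-geodesic, and the area of the normal exponential map is tracked through a Riccati equation along it. The key pointwise input is the generalized trace inequality, Lemma \ref{Lemma:TraceInequality}, which replaces $|h|^2\ge H^2/n$ by an inequality involving $H_\rho$, $\langle D\log\rho,\nu\rangle$ and $\psi''$. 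This is exactly where the condition $A_\kappa(r)\ge0$ in \eqref{equation:Monotonicity} enters: it makes the combination $\psi''+\tfrac12\psi'^2+(n+1)\frac{\cs_\kappa}{\sn_\kappa}\psi'+\kappa n\psi$ nondecreasing along the relevant geodesics, so the Riccati comparison quantity is monotone. The remaining steps are the usual ones: integrate up to the cut locus, note that every point of $\Omega$ is reached (take a minimizing foot point on $\Sigma$), and apply the coarea formula.

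\emph{Step 3: conclusion and main obstacle.} First, $H_\rho$ has a sign: at a point of $\Sigma$ farthest from $o$, the inequality $\psi'\ge0$ gives $H_\rho>0$, so the constant $\lambda$ is positive. The divergence theorem gives $\int_\Sigma \rho\langle X,\nu\rangle$ in terms of a volume integral over $\Omega$. Combining this with Step 1 gives $(n+1)\int_\Omega V\rho \ge \frac{n}{\lambda}\int_\Sigma V\rho$ (modulo weights), while Step 2 gives the reverse inequality. Hence equality holds in Heintze--Karcher, and its equality case forces $\Sigma$ to be umbilic with $\nu=\partial_r$, i.e. a centered geodesic sphere. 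Nonconstancy of $\rho$ together with $\psi'\ge0$ means $\psi'>0$ somewhere, which is enough to pin down the center. I expect the main obstacle to be Step 2: choosing the conformal factor so that the Riccati/trace computation produces exactly $A_\kappa$. The rest is bookkeeping, but the factor $\tfrac12\psi'^2$ and the term $\kappa n\psi$ indicate a delicate choice. My first attempt would be $\tilde g = e^{2\psi/n}g$, with Brendle's monotone quantity $\int_{\Sigma_t} V e^{\psi}/H_\rho$ recomputed from there.
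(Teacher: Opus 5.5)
Your architecture matches the paper's: an exact Minkowski identity, a weighted Heintze--Karcher inequality via Brendle's parallel surfaces for a conformal metric, then forced equality. But the proposal is missing the one idea that makes it work, and as written Steps 1 and 2 do not close.

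\textbf{The missing weight.} Everything is organized around the single function
\[
\phi = n\cs_\kappa(r) + \psi'(r)\sn_\kappa(r).
\]
\emph{Minkowski formula.} It is an exact identity, not an inequality. With $X=\sn_\kappa(r)\partial_r$ one has $\langle D^\Sigma \log\rho, X^T\rangle = \psi'\sn_\kappa\,(1-\langle \partial_r,\nu\rangle^2)$. This combines with $-H\langle X,\nu\rangle$ to give $\rho^{-1}\divA^\Sigma(\rho X^T) = \phi - H_\rho \langle X,\nu\rangle$. Hence $\lambda \int_\Sigma \langle X,\nu\rangle\, d\sigma_\rho = \int_\Sigma \phi\, d\sigma_\rho$, and no sign condition on $\psi'$ is used.

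\emph{The Heintze--Karcher inequality you need.} Since also $\divA_\rho X = (n+1)\cs_\kappa + \psi'\sn_\kappa = \cs_\kappa + \phi$, the inequality to prove is
\[
\int_\Omega (\cs_\kappa + \phi)\, dx_\rho \le \int_\Sigma \frac{\phi}{H_\rho}\, d\sigma_\rho .
\]
The $\psi'\sn_\kappa$ terms are not a matter of ``suitable weighting''. The inequality you display, with $V=\cs_\kappa$ alone, is false. For $\kappa=0$, $\psi=\alpha r^2$ and $\Sigma=\{|x|=R\}$ one computes $\frac{n}{H_\rho}\int_\Sigma \rho\, d\sigma < (n+1)\int_{\{|x|<R\}} \rho\, dx$ for every $R>0$. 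So it fails exactly on the expected equality cases.

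\emph{The conformal metric.} This choice is then forced as well: it must be $g=\phi^{-2}\delta_\kappa$, so that the $g$-parallel surfaces move with normal speed $\phi$, the numerator of $Q(t)=\int_{\Sigma_t}\phi/H_\rho\, d\sigma_\rho$. Neither your $V^{-2}\delta_\kappa$ nor $e^{2\psi/n}\delta_\kappa$ does this. With speed $\phi$ one has $\partial_t H_\rho = \Delta^{\Sigma_t}_\rho \phi + (|h|^2 + n\kappa - D^2\log\rho(\nu,\nu))\phi$. The Hessian of $\phi$ together with Lemma~\ref{Lemma:TraceInequality-Generalized} then gives
\[
\partial_t H_\rho \ge \cs_\kappa H_\rho^2 - H_\rho \langle D\phi,\nu\rangle + \sn_\kappa(r) A_\kappa(r)\, |\nu^\parallel|^2 .
\]
Hence $\partial_t (H_\rho/\phi) \ge \cs_\kappa H_\rho^2/\phi$ and $Q'(t) \le -\int_{\Sigma_t} (\cs_\kappa + \phi)\phi\, d\sigma_\rho$. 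The coarea formula, with $|Du|_{\delta_\kappa}=\phi^{-1}$ for $u=d_g(\cdot,\Sigma)$, converts this into exactly the bulk integrand $\cs_\kappa+\phi$ that the divergence theorem produces. Note also that $A_\kappa\ge 0$ enters pointwise, as the coefficient of $|\nu^\parallel|^2$ at the current radius. It does not enter as monotonicity of anything along the (non-radial) geodesics.

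\textbf{The centering step.} Your last step is also incorrect: $\psi'>0$ \emph{somewhere} does not pin down the center. Equality in the trace identity gives $\overset{\circ}{h}=0$ and $\psi'\sn_\kappa H^2|\nu^\parallel|^2 = 0$ on $\Sigma$, so you need $\psi'>0$ at points of $\Sigma$. The hypotheses allow $\psi$ to be constant on $[0,r_0]$. For $\kappa=0$, take $\psi'$ nonzero, smooth, convex, nondecreasing and vanishing on $[0,r_0]$; then $A_0=\psi'''+\psi'\psi''+(n+1)(r\psi''-\psi')/r^2\ge 0$. In that case every small sphere inside $\{r<r_0\}$ with center $\neq o$ has constant $H_\rho=H\neq 0$. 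So an extra hypothesis such as $\psi'>0$ on $(0,\infty)$ is genuinely needed; the paper's equality discussion tacitly uses it too.

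Your argument that $\lambda>0$, via a point of $\Sigma$ farthest from $o$, is correct. It fills in a step the paper leaves implicit.
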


Observe that for $\kappa = 0$ the condition $A_0 \geq 0$ is satisfied
for weights such as $\psi(r) = r^m$ and $\psi(r) = \log(1 + \exp(r^m))$
for $m \geq 2$, $\psi(r) = \cosh(r)$, however the condition fails for 
asymptotically linear weights such as $\psi(r) = \sqrt{1+r^2}$.
For $\kappa = -1$ the condition $A_{-1} \geq 0$ is satisfied
for weights such as $\psi(r) = \cosh(r)$, but again fails
for asymptotically linear weights such as $\psi(r) = \log(\cosh(r))$.
In Appendix B we show that in the euclidean setting the condition
$A_0 \geq 0$ implies log-convexity of the weight.

\vspace{2mm}

It is an interesting conjecture whether the conclusion of Theorem \ref{Theorem:AlexandrovTheorem-GeneralWeight} holds for all radially symmetric and log-convex weights.
We remark that the analogue of the isoperimetric problem with log-convex weights
(resolved by G.~Chambers \cite{Chambers-2019-LogConvexDensityConjecture}) was studied by the second author \cite{Silini-2024-HyperbolicLogConvexDensity} with applications
to symmetric solutions of the isoperimetric problem in compact rank-one symmetric spaces.

 This article is structured as follows:
 In Section 2 we recall our setup and preliminaries on weighted manifolds. In Section 3 we prove the weighted Minkowski formula for the expander weight, and the expander inequality
 relating the second fundamental form and the weighted mean curvature,
 and a hypersurface identity. In Section 4 we prove the weighted Heintze--Karcher type inequality and our main theorem. In the appendix we outline
 the generalization to more general weights and ambient spaces with constant negative curvature
 as stated in Theorem \ref{Theorem:AlexandrovTheorem-GeneralWeight}.
 
 \textbf{Addendum:} \ \\
 We note that J.~Bai and C.~Xia \cite{Bai-Xia-2026-AlexandrovTypeTheorem} have simultaneously and independently obtained similar results via a related technique.

 \textbf{Acknowledgements:} \ \\
The authors would like to thank the organisers of the 
``The Pisan workshop saga: Ep.V - Geometric analysis strikes back'',
where this project started. We also express our gratitude to Daniele Semola
for interesting discussions on the matter.
  
\section{Preliminaries}

For $\kappa \leq 0$ we recall the standard functions $\sn_{\kappa}, \cs_{\kappa}: (0,\infty) \rightarrow (0,\infty)$ given by
\begin{align*}
 \sn_{\kappa}(r)
 =
 \begin{cases}
  \frac{\sinh(\sqrt{-\kappa} r)}{\sqrt{-\kappa}} & \kappa < 0, \\
  r & \kappa = 0,
 \end{cases}
 \; \text{and} \;
 \cs_{\kappa}(r)
 =
 \begin{cases}
  \cosh(\sqrt{-\kappa} r) & \kappa < 0, \\
  1 & \kappa = 0.
 \end{cases}
\end{align*}
Note that for all $\kappa \leq 0$ the identities
$\sn_{\kappa}' = \cs_{\kappa}$ and $\cs_{\kappa}' = - \kappa \sn_{\kappa}$.

For $\kappa \leq 0$ we denote by $(M^{n+1}_{\kappa}, \delta_k)$ the simply--connected space form
with constant curvature $\kappa$, which is equipped with the volume measure $dx$
and its Levi--Civita connection $D$. More concretely, we use the polar coordinate model
on $(0,\infty) \times \Sphere^n$ with metric given by $\delta_k = dr \otimes dr + \sn_{\kappa}^2(r) \, \delta_{\Sphere^n}$.
For a base point $o \in M^{n+1}_{\kappa}$ we define the distance function
$r: M^{n+1}_{\kappa} \to [0,\infty)$ by $r(x) = \dist_{\delta_k}(x,o)$ and the radial vector field
$\partial_r = Dr(x)$.

For a two-sided hypersurface $\Sigma$ in $(M^{n+1}_{\kappa}, \delta_k)$ we denote its
unit normal vector field by $\nu$, its area measure by $d\sigma$ and its induced
Levi--Civita connection by $D^{\Sigma}$. Then the ambient Levi--Civita connection
splits as $D_X Y = D^{\Sigma}_X Y + A(X,Y)$, where $A$ denotes the 
(vector-valued) second fundamental form of $\Sigma$. We denote the (scalar-valued)
second fundamental form of $\Sigma$ by $h(X,Y) = \delta_k(D_X \nu, Y)$
and the mean curvauture by $H = \trace_{\delta_k} h$.

Throughout the article $\rho: M^{n+1}_{\kappa} \rightarrow (0,\infty)$
denotes a smooth positive weight. This induces a weighted volume measure
$dx_{\rho} = \rho \, dx$ and a weighted area measure $d\sigma_{\rho}
= \rho \; d\sigma$. Then, for an open set $\Omega^{n+1}$ in $M^{n+1}_{\kappa}$
its weighted volume is given by 
$V_{\kappa, \rho}(\Omega) = \int_{\Omega} dx_{\rho}$,
and for a hypersurface $\Sigma^n$ in $M^{n+1}_{\kappa}$ its weighted area is given by
$A_{\kappa, \rho}(\Sigma) = \int_{\Sigma} d\sigma_{\rho}$.
By an elementary computation one checks that the first variation of weighted area is given by
\begin{equation}
 \left. \frac{d}{dt} A_{\rho}(\Sigma_t) \right|_{t=0} 
 = \int_{\Sigma} f (H + \langle D \log \rho, \nu \rangle) \;  \rho \; d\sigma,
\end{equation}
where $f \in C^{\infty}_c(\Sigma)$ denotes the normal speed.
This motivates the definition of the weighted mean curvature given by
$H_{\rho} = H + \langle D \log \rho, \nu \rangle$.
Surfaces of constant weighted mean curvature (CWMC surfaces)
satisfy $H_{\rho} = \lambda$ for some $\lambda \in \R$.

Suppose $X$ is a vector field on $M^{n+1}_{\kappa}$. Then its weighted divergence
is given by
\[
 \divA_{\rho} X
 = \rho^{-1} \divA (\rho X) = \divA X + \langle D \log \rho, X \rangle.
\]
In particular, for a smooth function $f$ on $M^{n+1}_{\kappa}$ we obtain the weighted Laplacian
\[
 \Delta_{\rho} f = \rho^{-1} \divA(\rho Df) = \Delta f + \langle D \log \rho, Df \rangle.
\]

Similarly, on a hypersurface $\Sigma^n$ in $M^{n+1}_{\kappa}$ we define
for a tangential vector field $V$ on $\Sigma$ and a smooth function $f$ on $\Sigma$
the weighted divergence and the weighted Laplacian by
\begin{align*}
 \divA_{\rho}^{\Sigma} V
 &= \rho^{-1} \divA^{\Sigma} (\rho V)
 = \divA^{\Sigma} V + \langle D^{\Sigma} \log \rho, V \rangle, \\
 \Delta_{\rho}^{\Sigma} f 
 &= \rho^{-1} \divA^{\Sigma} (\rho D^{\Sigma} f)
 = \Delta^{\Sigma} f + \langle D^{\Sigma} \log \rho, D^{\Sigma} f \rangle,
\end{align*}
where $\divA^{\Sigma}$ and $\Delta^{\Sigma}$ denote
the divergence and the Laplace operator on the hypersurface $\Sigma$.

\section{Geometric observations for expander weight}

We first prove a weighted version of the Minkowski formula
for hypersurfaces:
 \begin{lemma}[Weighted Minkowski identity for expander space] \ \\ 
 Let $n \geq 2$ and $\alpha \in \R$. Suppose $\Sigma$ is a closed and embedded submanifold
 in expander space $(\R^{n+1}, \delta, \, \exp(\alpha |x|^2)) \, d\lambda)$.
 Then we have the weighted Minkowski identity 
  \begin{equation}
   \int_{\Sigma}
   H_{\rho} \langle x, \nu \rangle
   \; d\sigma_{\rho}
   =
   \int_{\Sigma}
   (n + 2 \alpha |x|^2) \; d \sigma_{\rho},
  \end{equation}
  where $d\sigma_{\rho} = \exp(\alpha |x|^2) \, d\sigma$ denotes the weighted area measure.
  \label{Lemma:WeightedMinkowskiFormula}
 \end{lemma}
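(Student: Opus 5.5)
The plan is to apply the weighted divergence theorem on the closed hypersurface $\Sigma$ to the tangential part of the position vector field, $x^{\top} = x - \langle x,\nu\rangle \nu$. Since $\Sigma$ is closed, for any tangential vector field $V$ we have $\int_{\Sigma} \divA^{\Sigma}_{\rho} V \, d\sigma_{\rho} = \int_{\Sigma} \rho^{-1}\divA^{\Sigma}(\rho V)\,\rho\, d\sigma = 0$. The identity should then follow by computing $\divA^{\Sigma}_{\rho}(x^{\top})$ explicitly and collecting terms.

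First, compute the unweighted divergence. For the ambient vector field $x$ we have $\divA^{\Sigma} x = n$, since $D_X x = X$ for every tangent vector $X$. For the normal part,
\[
\divA^{\Sigma}(\langle x,\nu\rangle \nu) = \langle x,\nu\rangle \divA^{\Sigma}\nu = H\langle x,\nu\rangle ,
\]
because $\nu$ is normal, so the gradient term drops out, and $\divA^{\Sigma}\nu = \trace h = H$ with the paper's convention $h(X,Y)=\langle D_X\nu,Y\rangle$. This gives
\[
\divA^{\Sigma} x^{\top} = n - H\langle x,\nu\rangle .
\]

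Second, add the weight term. With $\log\rho = \alpha|x|^2$ we have $D\log\rho = 2\alpha x$, so
\[
\langle D^{\Sigma}\log\rho, x^{\top}\rangle = 2\alpha |x^{\top}|^2 = 2\alpha\big(|x|^2 - \langle x,\nu\rangle^2\big).
\]
Using $H_{\rho} = H + 2\alpha\langle x,\nu\rangle$, we can rewrite
\[
-H\langle x,\nu\rangle - 2\alpha\langle x,\nu\rangle^2 = -H_{\rho}\langle x,\nu\rangle .
\]
Together these give
\[
\divA^{\Sigma}_{\rho} x^{\top} = n + 2\alpha|x|^2 - H_{\rho}\langle x,\nu\rangle .
\]
Integrating against $d\sigma_{\rho}$ and using that the left side integrates to zero yields the claimed identity.

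No step is a genuine obstacle; the only care needed is sign consistency. The convention $h(X,Y)=\langle D_X\nu,Y\rangle$ must give $\divA^{\Sigma}\nu = +H$, which matches the sign in $H_{\rho}$. Embeddedness and $n\ge 2$ are not used; closedness is what removes the boundary term.
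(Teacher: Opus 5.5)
Your proof is correct and is essentially the paper's argument. Since $D^{\Sigma}\tfrac12|x|^2 = x^{\top}$, your quantity $\divA^{\Sigma}_{\rho} x^{\top}$ is exactly the paper's $\tfrac12\Delta^{\Sigma}_{\rho}|x|^2 = n + 2\alpha|x|^2 - H_{\rho}\langle x,\nu\rangle$, which the paper then integrates over the closed hypersurface $\Sigma$ via the divergence theorem, just as you do.
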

 
 \begin{proof}
 By direct computation one obtains the well-known identity
 \begin{align*}
  \frac{1}{2} \Delta^{\Sigma} |x|^2 = n - H \langle x, \nu \rangle.
 \end{align*}
 Then we obtain from the definition of the weighted Laplacian on $\Sigma$ that
 \begin{align*}
  \frac{1}{2} \Delta^{\Sigma}_{\rho} |x|^2
  &= \frac{1}{2} \Delta^{\Sigma} |x|^2 + \left \langle D^{\Sigma} (\alpha |x|^2), D^{\Sigma}
  \left( \frac{1}{2} |x|^2 \right) \right \rangle \\
  &= n - H \langle x, \nu \rangle + \frac{1}{2} \alpha |D^{\Sigma} |x|^2|^2 \\
  &= n - H \langle x, \nu \rangle + 2 \alpha |x^{\parallel}|^2 \\
  &= n + 2 \alpha |x|^2 - 2 \alpha \langle x,\nu \rangle^2 - H \langle x, \nu \rangle \\
  &= n + 2 \alpha |x|^2 - H_{\rho} \langle x, \nu \rangle.
 \end{align*}
 We integrate this identity and apply the divergence theorem
 to conclude the proof.
 \end{proof}

Next, we prove an identity which reduces for $\alpha = 0$
to the trace identity $n|h|^2 - H^2 = n |\overset{\circ}{h}|^2$ for the second fundamental
form in Euclidean space. Note that for the shrinker weight, i.e.\ $\alpha < 0$,
we do not obtain an inequality with the correct sign.

\begin{lemma}[Trace inequality for second fundamental form for expander weight] \ \\
Let $n \geq 2$ and $\alpha \in \R$. Suppose $F: M^n \rightarrow \R^{n+1}$
is an immersion of the closed manifold $M^n$
into expander space $(\R^{n+1}, \delta, \exp(\alpha |x|^2) \, d\lambda)$. Then we have the pointwise identity
\begin{align*}
  (|h|^2 + 2 \alpha)(2\alpha |x|^2 + n) - H_{\rho}^2
  =
     (|h|^2 + 2\alpha) 2 \alpha |x^{\parallel}|^2 
    + |\overset{\circ}{h}|^2 (n + 2 \alpha \langle x, \nu \rangle^2)
    + \frac{2\alpha}{n}
    \left(
     H \langle x, \nu \rangle - n
    \right)^2.
 \end{align*}
 Moreover, for $\alpha \geq 0$
 the right-hand side is nonnegative; and for $\alpha > 0$ it vanishes if and only
 if $\Sigma^n = F(M^n)$ is a round sphere centered at the origin.
 \label{Lemma:TraceInequality}
\end{lemma}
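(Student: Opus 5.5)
The identity is purely algebraic and pointwise, so I would prove it by expanding both sides in terms of a few basic quantities. Write $u = \langle x, \nu \rangle$ and decompose $x = x^{\parallel} + u \nu$, so that $|x|^2 = |x^{\parallel}|^2 + u^2$. Recall that $H_{\rho} = H + 2\alpha u$ for the expander weight, and $|h|^2 = |\overset{\circ}{h}|^2 + \frac{1}{n} H^2$.

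\textbf{The identity.} Substitute $|x|^2 = |x^{\parallel}|^2 + u^2$ into the left-hand side. The part $(|h|^2 + 2\alpha) 2\alpha |x^{\parallel}|^2$ then matches the first term on the right exactly. What remains is
\[
(|h|^2+2\alpha)(n + 2\alpha u^2) - (H + 2\alpha u)^2 .
\]
Expanding, the $4\alpha^2 u^2$ terms cancel, leaving
\[
|h|^2(n+2\alpha u^2) + 2\alpha n - H^2 - 4\alpha H u .
\]
Now insert $|h|^2 = |\overset{\circ}{h}|^2 + H^2/n$. This produces $|\overset{\circ}{h}|^2(n+2\alpha u^2)$, and the $H^2$ terms cancel. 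The rest is
\[
\tfrac{2\alpha}{n}\bigl(H^2u^2 - 2nHu + n^2\bigr) = \tfrac{2\alpha}{n}(Hu - n)^2,
\]
which is the claimed identity. This step is routine bookkeeping.

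\textbf{Nonnegativity.} For $\alpha \geq 0$ each of the three terms on the right is a product of nonnegative factors, since $|h|^2 + 2\alpha \geq 0$ and $n + 2\alpha u^2 > 0$. So the right-hand side is nonnegative.

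\textbf{Rigidity.} This is the only step needing geometric input. Let $\alpha > 0$ and suppose the right-hand side vanishes. Because $|h|^2 + 2\alpha \geq 2\alpha > 0$ and $n + 2\alpha u^2 > 0$, each term must vanish separately. Hence:
\begin{itemize}
\item[(i)] $x^{\parallel} = 0$ on $\Sigma$;
\item[(ii)] $\overset{\circ}{h} = 0$;
\item[(iii)] $Hu = n$.
\end{itemize}
Condition (i) says $D^{\Sigma}|x|^2 = 2x^{\parallel} = 0$. So $|x|^2$ is locally constant on $M$, and $F(M)$ lies in a union of spheres $\{|x| = R\}$ centered at the origin. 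Condition (iii) forces $u \neq 0$, so $R > 0$.

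If $M$ is connected (implicit here; otherwise apply the argument to each component), then $F$ is an immersion of a closed $n$-manifold into the $n$-sphere of radius $R$. Such a map is a local diffeomorphism with compact image, hence surjective. So $\Sigma = F(M)$ is the round sphere of radius $R$ centered at the origin. Conditions (ii) and (iii) are then automatically consistent.

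\textbf{Converse.} On a centered sphere of radius $R$ with outward normal we have $x^{\parallel} = 0$, the second fundamental form is umbilic so $\overset{\circ}{h} = 0$, and $H = n/R$ with $u = R$, so $Hu = n$. Thus all three terms vanish.

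The main point to be careful about is the sign convention: with $h(X,Y) = \langle D_X \nu, Y \rangle$ and $\nu$ outward one gets $H = n/R$ and $u = R$. With the opposite orientation both $H$ and $u$ change sign, so the product $Hu = n$ still holds.
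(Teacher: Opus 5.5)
Your proof is correct and follows the paper's argument: the same expansion via $|x|^2 = |x^{\parallel}|^2 + \langle x,\nu\rangle^2$, $H_{\rho} = H + 2\alpha\langle x,\nu\rangle$ and $|h|^2 = |\overset{\circ}{h}|^2 + H^2/n$, followed by the same termwise nonnegativity. The only variation is in the rigidity step. You obtain the centered sphere directly from $x^{\parallel}=0$: $|x|$ is constant, and the local-diffeomorphism/compactness argument gives $F(M)=\{|x|=R\}$. The paper instead first invokes the classification of totally umbilic hypersurfaces and only then uses $x^{\parallel}=0$ for centering. Your route is slightly more self-contained, and your explicit converse and connectedness remark supply points the paper leaves implicit.
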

 
 \begin{proof} 
  We compute by using the definition of the weighted
  mean curvature and the trace decomposition of symmetric
  two tensors that 
  \begin{align*}
   &(|h|^2 + 2 \alpha)(2\alpha |x|^2 + n) - H_{\rho}^2 \\
    =&
    (|h|^2 + 2 \alpha) 2 \alpha |x^{\parallel}|^2 
    + (|h|^2 + 2 \alpha) 2 \alpha \langle x, \nu \rangle^2
    + n |h|^2 + 2 \alpha n
    -
    (H^2 + 4 \alpha H \langle x, \nu \rangle + 4 \alpha^2 \langle x, \nu \rangle^2) \\
    =&
    (|h|^2 + 2\alpha) 2 \alpha |x^{\parallel}|^2 
    + |\overset{\circ}{h}|^2 (n + 2 \alpha \langle x, \nu \rangle^2)
    + 2 \alpha n  + \frac{2\alpha H^2}{n} \langle x, \nu \rangle^2
    - 4 \alpha H \langle x, \nu \rangle \\
        =&
    (|h|^2 + 2\alpha) 2 \alpha |x^{\parallel}|^2 
    + |\overset{\circ}{h}|^2 (n + 2 \alpha \langle x, \nu \rangle^2)
    + \frac{2\alpha}{n}
    \left(
     H \langle x, \nu \rangle - n
    \right)^2.
  \end{align*}
  For $\alpha \geq 0$ the right-hand side is nonnegative as a sum of three nonnegative terms. Moreover, for $\alpha > 0$
  vanishing of the right-hand side forces that $\Sigma$ is totally umbilic,
  and hence a sphere; then $x^{\parallel} = 0$ forces that the sphere is centered.
 \end{proof}

 \begin{lemma} \ \\
  Let $n \geq 2$ and $\alpha > 0$. Suppose $F: M^n \rightarrow \R^{n+1}$
  is an immersion of the manifold $M^n$ into expander space $(\R^{n+1}, \delta, \exp(\alpha |x|^2) \, d\lambda)$. Then the function $\phi: \R^{n+1} \rightarrow \R$ given by 
  $\phi(x) = n + 2 \alpha |x|^2$ satisfies the equation
  \begin{equation*}
   \Delta^{\Sigma}_{\rho} \phi = 4 \alpha (\phi - H_{\rho} \langle x, \nu \rangle).
  \end{equation*}
  \label{Lemma:HypersurfaceIdentity}
 \end{lemma}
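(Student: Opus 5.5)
The identity follows by direct computation from the formula already obtained in the proof of Lemma \ref{Lemma:WeightedMinkowskiFormula}. Since $\phi = n + 2\alpha |x|^2$, we have $\Delta^{\Sigma}_{\rho} \phi = 4\alpha \cdot \frac{1}{2} \Delta^{\Sigma}_{\rho} |x|^2$. Note that the computation in that proof is purely pointwise: it uses neither closedness nor embeddedness, only the immersion. So I will reuse it verbatim:
\[
 \tfrac{1}{2} \Delta^{\Sigma}_{\rho} |x|^2 = n + 2\alpha |x|^2 - H_{\rho} \langle x, \nu \rangle = \phi - H_{\rho}\langle x,\nu\rangle .
\]
Multiplying by $4\alpha$ then gives the claim immediately.

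For completeness I will recall the ingredients of that pointwise computation. The first is the standard identity $\frac{1}{2}\Delta^{\Sigma}|x|^2 = n - H\langle x,\nu\rangle$. This comes from $D^{\Sigma}\tfrac12|x|^2 = x^{\parallel}$ together with $\divA^{\Sigma} x = n$ and $\divA^{\Sigma}(x^{\perp}) = -H\langle x,\nu\rangle$, using the sign convention $h(X,Y) = \langle D_X\nu, Y\rangle$. The second is the drift term: $D^{\Sigma}\log\rho = 2\alpha x^{\parallel}$, so that $\langle D^{\Sigma}\log\rho, D^{\Sigma}\tfrac12|x|^2\rangle = 2\alpha|x^{\parallel}|^2$. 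The third is the decomposition $|x^{\parallel}|^2 = |x|^2 - \langle x,\nu\rangle^2$, combined with $H_{\rho} = H + 2\alpha\langle x,\nu\rangle$.

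There is no genuine obstacle here. The only point requiring care is the sign convention for $H$, which must match the one in the definition $H_{\rho} = H + \langle D\log\rho,\nu\rangle$. Getting this right ensures that the $-2\alpha\langle x,\nu\rangle^2$ term combines with $-H\langle x,\nu\rangle$ into exactly $-H_{\rho}\langle x,\nu\rangle$.
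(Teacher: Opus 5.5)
Your proof is correct and is essentially the paper's argument: the paper redoes the same pointwise computation with the factor $2\alpha$ pulled out. That computation is exactly $4\alpha$ times the identity $\frac{1}{2}\Delta^{\Sigma}_{\rho}|x|^2 = \phi - H_{\rho}\langle x,\nu\rangle$ from the Minkowski lemma that you reuse. One small slip in your recalled ingredients: with $h(X,Y) = \langle D_X\nu, Y\rangle$ one has $\divA^{\Sigma}(x^{\perp}) = +H\langle x,\nu\rangle$ (not $-H\langle x,\nu\rangle$), and it is this sign that gives $\divA^{\Sigma}(x^{\parallel}) = n - H\langle x,\nu\rangle$.
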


 \begin{proof} 
  We compute
  \begin{align*}
   \Delta^{\Sigma}_{\rho} \phi
   &= \Delta^{\Sigma} \phi + \langle D^{\Sigma} \log \rho, D^{\Sigma} \phi \rangle \\
   &= 2 \alpha (\Delta^{\Sigma} |x|^2 + \alpha |D^{\Sigma} |x|^2|^2) \\
    &= 2 \alpha ( 2n - 2 H \langle x, \nu \rangle + 4 \alpha |x^{\parallel}|^2 ) \\
   &= 2 \alpha ( 2n - 2 H \langle x, \nu \rangle + 4 \alpha (|x|^2 - \langle x, \nu \rangle^2) ) \\
   &= 4\alpha (n + 2 \alpha |x|^2 - \langle x, \nu \rangle (H + 2 \alpha \langle x, \nu \rangle))  \\
   &= 4 \alpha (\phi - H_{\rho} \langle x, \nu \rangle),
  \end{align*}
  where we used the definition of the weighted Laplacian on $\Sigma$,
  the definition of the expander weight, the identity relating the ambient Laplacian
  and the hypersurface Laplacian.
 \end{proof}

\section{Weighted Heintze--Karcher inequality} 

In this section we prove the following Heintze--Karcher inequality, which is central
for our argument. Our argument uses
parallel surfaces with respect to a conformal metric;
this closely follows work of work of S.~Brendle \cite[Section 3]{Brendle-2013-Alexandrov-WarpedProducts}.

 \begin{theorem}[Weighted Heintze--Karcher inequality
 for expander weight] \ \\ 
 Let $n \geq 2$ and $\alpha > 0$. Suppose $\Sigma^n$ is a closed, connected and embedded submanifold
 with positive weighted mean curvature $H_{\rho} >0$ bounding a non-empty open and connected subset $\Omega$ in $\R^{n+1}$. Then we have the weighted Heintze--Karcher inequality
  \begin{equation}
   \int_{\Omega}
   (n+1 + 2\alpha |x|^2) \; d\lambda_{\rho}
   \leq 
   \int_{\Sigma}
   \frac{n + 2\alpha |x|^2}{H_{\rho}} \; d\sigma_{\rho},
  \end{equation}
  where $d\lambda_{\rho} = \exp(\alpha|x|^2) \, d\lambda$
  and $d\sigma_{\rho} = \exp(\alpha |x|^2)|_{\Sigma} \, d\sigma$
  are the weighted volume and area measures.
  Moreover, equality holds if and only if $\Sigma^n$ is a sphere centered at the origin. 
  \label{Theorem:HeintzeKarcherInequality}
 \end{theorem}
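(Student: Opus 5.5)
The plan is to follow Brendle's parallel surface argument, adapted to the weight $\rho = \exp(\alpha|x|^2)$. The first step is to rewrite the left-hand side. Since $\divA_{\rho}(x) = n+1+2\alpha|x|^2$, the divergence theorem gives $\int_{\Omega}(n+1+2\alpha|x|^2)\,d\lambda_{\rho} = \int_{\Sigma}\langle x,\nu\rangle\,d\sigma_{\rho}$ for the outward normal. Still, it is more natural to prove the inequality as a volume-covering statement.

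\emph{Construction.} Let $\nu$ be the inward unit normal, with the sign convention in which $H_{\rho}>0$. Consider the normal map $\Phi(y,t) = \exp_y(t\,\nu(y))$ in a suitable metric $\bar g$ conformal to $\delta$. The first attempt is $\bar g = \delta$ itself, with straight normal lines. Let $\tau(y)$ be the cut time, i.e.\ the supremum of $t$ such that the segment minimizes distance to $\Sigma$. Every point of $\Omega$ has a nearest point on $\Sigma$, so $\Phi$ maps $\{(y,t): 0\le t<\tau(y)\}$ onto $\Omega$ up to a null set. The area formula then gives
\[
\int_{\Omega}(\phi+1)\,d\lambda_{\rho} \le \int_{\Sigma}\int_0^{\tau(y)}(\phi+1)(\Phi(y,t))\,J_{\rho}(y,t)\,dt\,d\sigma(y),
\]
where $\phi = n+2\alpha|x|^2$ as in Lemma \ref{Lemma:HypersurfaceIdentity}, and $J_{\rho} = J\,\rho\circ\Phi$ is the weighted Jacobian.

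\emph{Evolution along normal lines.} Along each line, $(\log J_{\rho})' = -H_{\rho}(t)$, where $H_{\rho}(t)$ is the weighted mean curvature of the parallel surface. The Riccati equation together with $\frac{d}{dt}\langle x,\nu\rangle = \pm1$ gives
\[
H_{\rho}' = |h|^2 + 2\alpha .
\]
Lemma \ref{Lemma:TraceInequality} then yields $H_{\rho}' \ge H_{\rho}^2/\phi$, so $H_{\rho}$ stays positive and finite before the cut time. The aim is to show that $Q = \phi J_{\rho}/H_{\rho}$ satisfies $Q' \le -(\phi+1)J_{\rho}$. Integrating from $0$ to $\tau(y)$, using $Q\ge0$, and then integrating over $\Sigma$ gives exactly $\int_{\Sigma}\phi/H_{\rho}\,d\sigma_{\rho}$ on the right-hand side.

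\emph{Main obstacle.} A direct computation gives $Q' \le J_{\rho}\bigl(\phi'/H_{\rho} - \phi - 1\bigr)$ with $\phi' = \mp 4\alpha\langle x,\nu\rangle$. So the bad term $\phi'/H_{\rho}$ must be controlled, and it has no sign unless $\langle x,\nu\rangle$ has one along the lines. This is where I expect to need Brendle's device: replace $\delta$ by a radially conformal metric $\bar g = \mathrm{e}^{2w(|x|)}\delta$, chosen so that the extra terms in the conformally modified Riccati equation cancel this defect. A secondary option is to use the unused nonnegative terms of Lemma \ref{Lemma:TraceInequality}, namely $2\alpha(|h|^2+2\alpha)|x^{\parallel}|^2$ and $\frac{2\alpha}{n}(H\langle x,\nu\rangle-n)^2$, or Lemma \ref{Lemma:HypersurfaceIdentity}, to absorb it. I would first fix $w$ by requiring the radial parts to cancel, and then verify the differential inequality using the trace lemma.

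\emph{Equality.} Equality forces equality in Lemma \ref{Lemma:TraceInequality} at $t=0$, so $\Sigma$ is a sphere centered at the origin. Conversely, for centered spheres all the inequalities above are equalities, which one checks directly.
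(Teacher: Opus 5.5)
There is a genuine gap. Your framework is the right one: parallel surfaces, the quantity $\phi J_\rho/H_\rho$, the area formula, and Lemma \ref{Lemma:TraceInequality} for positivity and for equality. But the decisive step is exactly the one you leave open, and the computation you do carry out has a sign error. For unit-speed inward parallel surfaces, $\frac{d}{dt}\langle x,\nu\rangle=-1$ for the outward normal, so $H_\rho'=|h|^2-2\alpha$, not $|h|^2+2\alpha$. You can check this on centered spheres of radius $R-t$: $\frac{d}{dt}\bigl(\frac{n}{R-t}+2\alpha(R-t)\bigr)=\frac{n}{(R-t)^2}-2\alpha$.

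Consequently Lemma \ref{Lemma:TraceInequality} does not give $H_\rho'\ge H_\rho^2/\phi$, and positivity of $H_\rho$ is not established. The correct estimate, using Lemma \ref{Lemma:HypersurfaceIdentity}, is
\[
 Q'\le J_\rho\Bigl(-\phi-1+\frac{4\alpha(\phi-H_\rho\langle x,\nu\rangle)}{H_\rho^2}\Bigr)
 =J_\rho\Bigl(-\phi-1+\frac{\Delta^{\Sigma}_\rho\phi}{H_\rho^2}\Bigr).
\]
This defect has no sign, and the terms discarded in Lemma \ref{Lemma:TraceInequality} do not dominate it pointwise. Take a point with $x^{\parallel}=0$ on a slightly off-center sphere, and set $s=n-H\langle x,\nu\rangle>0$, which is small there. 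The defect is $4\alpha s$, while the only surviving discarded term is $\frac{2\alpha}{n}s^2$. So neither Euclidean normal lines nor your ``secondary option'' close the argument.

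The missing idea is the specific conformal factor $g=\phi^{-2}\delta$, with the same $\phi=n+2\alpha|x|^2$ as in the numerator. Equivalently, the parallel surfaces move with Euclidean normal velocity $-\phi\nu$. Then
\[
 \partial_t H_\rho=\Delta^{\Sigma}\phi+|h|^2\phi+2\alpha\bigl(-\phi+\langle x,D^{\Sigma}\phi\rangle\bigr)=\Delta^{\Sigma}_\rho\phi+(|h|^2-2\alpha)\phi=(|h|^2+2\alpha)\phi-4\alpha H_\rho\langle x,\nu\rangle,
\]
where Lemma \ref{Lemma:HypersurfaceIdentity} supplies precisely the missing $4\alpha\phi$. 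Lemma \ref{Lemma:TraceInequality} now gives $\partial_t H_\rho\ge H_\rho^2-4\alpha H_\rho\langle x,\nu\rangle$. Since $\partial_t\phi=-4\alpha\phi\langle x,\nu\rangle$, the $\langle x,\nu\rangle$-terms cancel identically. This gives $\partial_t(H_\rho/\phi)\ge H_\rho^2/\phi\ge 0$, so $H_\rho$ stays positive, and $\partial_t(\phi/H_\rho)\le-\phi$.

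Combined with $\partial_t\,d\sigma_\rho=-\phi H_\rho\,d\sigma_\rho$, this yields $Q'\le-\int_{\Sigma_t}(1+\phi)\phi\,d\sigma_\rho$. The extra factor $\phi$ is exactly the coarea factor $|Du|^{-1}$ for $u=d_g(\cdot,\Sigma)$, i.e.\ the speed factor that must appear in your Jacobian. Until you make this choice and verify the cancellation, your proposal does not prove the inequality.
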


 Suppose $\Sigma^n$ is a connected, closed and embedded submanifold of $(\R^{n+1}, \delta)$.
 The Jordan--Brower separation theorem implies that $\Sigma^n$ is orientable.
 Moreover, there exists an open subset $\Omega$ of $\R^{n+1}$, such that $\partial \Omega = \Sigma$.
 We choose the outward point unit normal vector field $\nu \in \Gamma(N \Sigma)$,
 and assume that $\Sigma$ has positive weighted mean curvature $H_{\rho} = 
 H + 2 \alpha \langle x, \nu \rangle$ with respect to this choice of unit normal.

 For $\phi: \R^{n+1} \rightarrow [n, \infty)$ given by $\phi(x) = n + 2 \alpha |x|^2$
 we consider the conformal metric $g = \phi^{-2} \delta$. We define $u: \overline{\Omega} \rightarrow [0,\infty)$ by $u(p) = d_{g}(p, \Sigma)$. 
 We denote by $O_g \subseteq T\R^{n+1}$ the domain of definition of the exponential map 
 with respect to the Riemannian metric $g$, and denote by $\exp: O_g \rightarrow \R^{n+1}$
 the associated exponential map. Denote the normal bundle of $\Sigma$ by $N \Sigma$.
 Then there is a normal exponential map $\exp^{\perp}: O_g \cap N \Sigma \rightarrow \R^{n+1}$, compare the book of J.~Lee, \cite[p.~144]{Lee-2018-RiemannianGeometry} or the book by P.~Petersen, \cite[p.~189--190]{Petersen-2016-RiemannianGeometry}.
 We define 
 \[
  U = \left\{ (t,x) \in [0,\infty) \times \Sigma: (x, -t \nu(x)) \in O_g \cap N\Sigma \right\} \subseteq [0,\infty) \times \Sigma
 \]
and the map $\Phi: U \rightarrow \R^{n+1}$ by $\Phi(t,x) = \exp^{\perp}(x, -t \nu(x))$.
It follows for all $x \in \Sigma$ that 
\begin{align*}
 \Phi(0,x) = x \quad \text{and} \quad 
 \left. \frac{\partial}{\partial t} \Phi(t,x) \right|_{t=0} = - \phi(x) \nu(x),
\end{align*}
and the map $t \mapsto \Phi(t,x)$ is a unit speed geodesic
for the metric $g$ for $(t,x) \in U$.

We define the \emph{segment domain of the hypersurface} $\Sigma$ by
\begin{align*}
 \seg(\Sigma) = \{(t,x) \in U: t = u(\Phi(t,x)) \}
\end{align*}
and the \emph{interior of the segment domain of the the hypersurface} $\Sigma$ by
\begin{align*}
 \seg^0(\Sigma) = \{(t,x) \in U: \exists \delta > 0: (t+\delta, x) \in \seg(\Sigma) \}.
\end{align*}

We have the following proposition, which is stated as Proposition 3.1 in work of S.~Brendle \cite{Brendle-2013-Alexandrov-WarpedProducts}:
 
 \begin{proposition}[Properties of segment domains for normal exponential map of hypersurfaces] \ \\
 The segment domains $\seg(\Sigma)$ and $\seg^0(\Sigma)$ have the following properties:
  \begin{enumerate}
   \item If $(t_0,x) \in \seg(\Sigma)$, then $(t,x) \in \seg(\Sigma)$ for all $t \in [0,t_0]$.
   \item The segment domain $\seg(\Sigma)$ is a closed set, and we have $\Phi(\seg(\Sigma))
   = \overline{\Omega}$.
   \item The interior segment domain $\seg^0(\Sigma)$ is an open subset of $U$
   and the restriction $\Phi|_{\seg^0(\Sigma)}$ is a diffeomorphism.
  \end{enumerate}
 \end{proposition}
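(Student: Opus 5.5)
The three properties are the standard facts about cut loci for the normal exponential map, here for the complete metric $g=\phi^{-2}\delta$ on $\R^{n+1}$ restricted to $\overline{\Omega}$. Since $\phi\ge n>0$ is smooth, $g$ is a smooth Riemannian metric. Completeness is not even needed: $\overline{\Omega}$ is compact and every minimizing $g$-path from a point of $\Omega$ to $\Sigma$ stays in a compact set. The plan is to reproduce the argument behind Brendle's Proposition 3.1, using only that $u=d_g(\cdot,\Sigma)$ is a distance function to a compact smooth hypersurface.

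\textbf{Step 1: property (1).} Fix $(t_0,x)\in\seg(\Sigma)$, so that $\gamma(t)=\Phi(t,x)$, $t\in[0,t_0]$, is a unit speed $g$-geodesic of length $t_0$ ending at a point with $u=t_0$. For $t\le t_0$ the triangle inequality gives
\[
t_0=u(\gamma(t_0))\le u(\gamma(t))+(t_0-t).
\]
Together with $u(\gamma(t))\le t$ (the curve $\gamma|_{[0,t]}$ is a competitor), this yields $u(\gamma(t))=t$. I also need $\gamma([0,t_0])\subseteq\overline\Omega$. This holds because $\gamma$ starts inward, since $\partial_t\Phi=-\phi\nu$, and a minimizing curve cannot re-cross $\Sigma$ before its endpoint without contradicting minimality.

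\textbf{Step 2: property (2).} Closedness follows from continuity of $u$ and $\Phi$ on the closed set $U$. Let $(t_k,x_k)\to(t,x)$ with $t_k=u(\Phi(t_k,x_k))$; then $t=u(\Phi(t,x))$. One must check that $(t,x)\in U$, which holds because $g$-geodesics starting in the compact set $\overline\Omega$ extend for bounded times. Surjectivity onto $\overline\Omega$ works as follows. Given $p\in\overline\Omega$, compactness of $\Sigma$ yields a closest point $x\in\Sigma$ and a $g$-minimizing curve from $x$ to $p$ of length $u(p)$, which can be taken to be a geodesic. The first variation formula forces its initial velocity to be $g$-orthogonal to $T_x\Sigma$, hence parallel to $\nu$, and it points inward. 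So $p=\Phi(u(p),x)$ and $(u(p),x)\in\seg(\Sigma)$. Conversely $\Phi(\seg(\Sigma))\subseteq\overline\Omega$ by Step 1.

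\textbf{Step 3: property (3), the main obstacle.} Here I follow the classical cut locus argument, e.g.\ Petersen's treatment of the segment domain. First, for $(t,x)\in\seg^0(\Sigma)$ the point $\Phi(t,x)$ is not a focal point along $\gamma_x$. Otherwise, by the index form, geodesics minimizing beyond a focal point would fail to minimize, contradicting $(t+\delta,x)\in\seg$. Hence $d\Phi$ is invertible at $(t,x)$. Second, $\Phi$ is injective on $\seg^0$. If $\Phi(t,x)=\Phi(t',x')$ with distinct normal geodesics, then $t=t'=u$ at that point, and concatenating one geodesic with the continuation of the other beyond it produces a broken curve realizing $u$ further along, which can be shortened. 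This contradicts minimality up to $t+\delta$. Third, openness: near $(t,x)$ I use the local diffeomorphism together with a lower semicontinuity argument for the cut time $\tau(x)=\sup\{t:(t,x)\in\seg\}$. This is the genuinely delicate part: $\seg^0=\{t<\tau(x)\}$, so openness amounts to lower semicontinuity of $\tau$ (in fact continuity). I would prove that $\tau$ is continuous by the standard dichotomy: at $t=\tau(x)$ either the point is the first focal point or two distinct minimizing normal geodesics meet there. Both conditions are closed under limits, as in the proof of continuity of the cut time for a point. An injective local diffeomorphism on an open set is a diffeomorphism onto its image, which completes (3).
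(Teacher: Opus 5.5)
Your argument is correct. The paper gives no proof of this proposition; it imports it verbatim from Brendle's Proposition 3.1, and your Steps 1--3 are the standard segment-domain (cut-locus) argument behind that citation.

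One factual correction: $g=(n+2\alpha|x|^2)^{-2}\delta$ is \emph{not} complete. It is the round metric on $\Sphere^{n+1}$ of radius $(8\alpha n)^{-1/2}$ with one point removed, and $U$ is only open, not closed. So, as you anticipate, both the existence of minimizers and the fact that limits of points of $\seg(\Sigma)$ lie in $U$ must be obtained inside the compact set $\overline{\Omega}$. For the former, truncate competitors at their first hitting time of $\Sigma$; for the latter, use Step 1 together with the escape lemma, rather than Hopf--Rinow.
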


 We then have as stated below Proposition 3.1 in work of S.~Brendle
 \cite{Brendle-2013-Alexandrov-WarpedProducts}:
 \begin{proposition} \ \\
  We define the maximal time $T := \sup\{ t \in [0,\infty): \exists x \in \Sigma: (t,x) \in U \}$. and for $t \in [0,T)$
  we define the subset $\Sigma_t \subseteq \R^{n+1}$ by $\Sigma_t = \Phi(\seg^0(\Sigma) \cap (\{t\} \times \Sigma))$. Then $\Sigma_t$ is a smooth hypersurface for all
  $t \in [0,T)$ and we have $\Sigma_0 = \Sigma$. 
 \end{proposition}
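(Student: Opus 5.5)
The plan is to reduce the statement to Proposition~(Properties of segment domains), in particular part (3), which says that $\seg^0(\Sigma)$ is open in $U$ and $\Phi|_{\seg^0(\Sigma)}$ is a diffeomorphism onto its image. For fixed $t\in[0,T)$ let $S_t=\{x\in\Sigma:(t,x)\in\seg^0(\Sigma)\}$. Since $\seg^0(\Sigma)$ is open in $U$, and $U$ is a relatively open subset of $[0,\infty)\times\Sigma$ (because $O_g$ is open in $T\R^{n+1}$), the slice $S_t$ is an open subset of $\Sigma$. Hence $\{t\}\times S_t$ is an embedded $n$-dimensional submanifold of $\seg^0(\Sigma)$ (possibly empty). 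Then $\Sigma_t=\Phi(\{t\}\times S_t)$ is the image of an embedded submanifold under a diffeomorphism onto an open set, so it is an embedded smooth hypersurface in $\R^{n+1}$.

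There is a caveat at $t=0$. There $\seg^0(\Sigma)$ contains boundary points $\{0\}\times\Sigma$ of the manifold-with-boundary $[0,\infty)\times\Sigma$. I would interpret ``diffeomorphism'' in part (3) in the sense of manifolds with boundary, so the slice $\{0\}\times S_0$ is the boundary face and its image is still a smooth embedded hypersurface. Alternatively, I would argue directly that $\Phi(0,\cdot)$ is the inclusion $\Sigma\hookrightarrow\R^{n+1}$.

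To show $\Sigma_0=\Sigma$, I need $S_0=\Sigma$, that is, for every $x\in\Sigma$ there is $\delta>0$ with $(\delta,x)\in\seg(\Sigma)$. The strategy is a tubular-neighbourhood argument for the metric $g$ on the compact hypersurface $\Sigma$. Because $\Sigma$ is compact and embedded, the normal exponential map of $g$ restricted to $[0,\varepsilon)\times\Sigma$ is a diffeomorphism onto a one-sided collar in $\overline{\Omega}$ for some small uniform $\varepsilon>0$. Moreover, every point $p$ in this collar has a unique $g$-closest point on $\Sigma$, reached by the inward normal geodesic with $u(p)$ equal to the parameter. Hence $(t,x)\in\seg(\Sigma)$ for all $t<\varepsilon$ and all $x$, and taking $\delta=\varepsilon/2$ gives $(0,x)\in\seg^0(\Sigma)$. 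Therefore $\Sigma_0=\Phi(\{0\}\times\Sigma)=\Sigma$, using $\Phi(0,x)=x$.

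I expect the main obstacle to be this collar claim: that the first-variation argument forces a minimizing path from $p$ to $\Sigma$ to hit $\Sigma$ orthogonally (for $g$, equivalently for $\delta$, since $g$ is conformal) from the inside. I would first invoke the standard $\varepsilon$-tubular neighbourhood theorem for compact submanifolds, with $\varepsilon$ below the focal radius and below half the injectivity-type separation constant. Everything else is formal once part (3) of the preceding proposition is granted. Smoothness does not require nonemptiness; if some $\Sigma_t$ is empty the statement holds vacuously.
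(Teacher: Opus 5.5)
Your proposal is correct and takes essentially the same approach as the paper. The paper gives no separate proof and takes the statement from Brendle, where it is read off from part (3) of the segment-domain proposition exactly as you do: slice the open set $\seg^0(\Sigma)$ at height $t$ and push forward by the diffeomorphism $\Phi$. Your tubular-neighbourhood argument, which shows $(\delta,x)\in\seg(\Sigma)$ for small $\delta>0$ and hence $\Sigma_0=\Sigma$, fills in a detail that the paper and Brendle treat as evident.
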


 We observe that the mean curvature stays positive along the evolution; this
 can be seen by combining the Jacobi equation for the mean curvature with the 
 trace inequality for the second fundamental form for the expander weight,
 Lemma \ref{Lemma:TraceInequality}, and the weighted Laplacian of the velocity, Lemma 
 \ref{Lemma:HypersurfaceIdentity}.
 
 \begin{proposition}[Evolution of weighted mean curvature] \ \\
  The weighted mean curvature $t \mapsto H_{\rho}^{\Sigma_t}$ is a positive smooth function on $[0,T)$. Morevoer, we have the differential inequality
  \begin{align*}
   \frac{\partial}{\partial t} H_{\rho}^{\Sigma_t}
   \geq (H_{\rho}^{\Sigma_t})^2 - 4 \alpha H_{\rho}^{\Sigma_t} \langle \Phi(t,x), \nu_t \rangle.
  \end{align*}
  \label{Proposition:Evolution-WeightedMeanCurvature}
 \end{proposition}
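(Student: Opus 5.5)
The plan is to compute the first variation of $H_{\rho}$ along the family $\Sigma_t$, viewed as a normal variation in Euclidean space. The resulting expression is then reduced with Lemma \ref{Lemma:HypersurfaceIdentity} and bounded below with Lemma \ref{Lemma:TraceInequality}. Positivity will follow from the resulting ODE inequality by a Gronwall argument.

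\emph{Step 1 (kinematics).} Since $\Phi|_{\seg^0(\Sigma)}$ is a diffeomorphism, each $\Sigma_t$ is a smooth hypersurface and $(t,x)\mapsto\Phi(t,x)$ parametrizes the family smoothly. By the Gauss lemma for the metric $g=\phi^{-2}\delta$, the $g$-geodesics $t\mapsto \Phi(t,x)$ meet the level sets $\Sigma_t$ of $u$ $g$-orthogonally. Conformal metrics have the same orthogonality, so these geodesics are also Euclidean-orthogonal to $\Sigma_t$. They have unit $g$-speed, hence Euclidean speed $\phi$. Therefore $\partial_t\Phi=-\phi\,\nu_t$, i.e.\ $\Sigma_t$ moves inward with Euclidean normal speed $\phi$, where $\nu_t$ is the outward unit normal. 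I expect this identification, which must hold on all of $\seg^0(\Sigma)$ and not only at $t=0$, to be the main point that needs care. It rests on $u$ being smooth with $|Du|_g=1$ there, and $\Sigma_t$ being its level set.

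\emph{Step 2 (variation formulas).} For an inward normal variation with speed $\phi$, the standard Euclidean formulas give
\[
\partial_t H=\Delta^{\Sigma_t}\phi+|h|^2\phi,\qquad \partial_t\nu_t=D^{\Sigma_t}\phi .
\]
Hence
\[
\partial_t\langle \Phi,\nu_t\rangle=-\phi+\langle x^{\parallel},D^{\Sigma_t}\phi\rangle=-\phi+4\alpha|x^{\parallel}|^2,
\]
using $D^{\Sigma}\phi=4\alpha x^{\parallel}$. Also $\langle D^{\Sigma}\log\rho,D^{\Sigma}\phi\rangle=8\alpha^2|x^{\parallel}|^2$. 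Combining these,
\[
\partial_t H_{\rho}=\Delta^{\Sigma_t}_{\rho}\phi+(|h|^2-2\alpha)\phi .
\]
Inserting Lemma \ref{Lemma:HypersurfaceIdentity}, $\Delta^{\Sigma_t}_\rho\phi=4\alpha\phi-4\alpha H_\rho\langle x,\nu\rangle$, gives
\[
\partial_t H_{\rho}=(|h|^2+2\alpha)(n+2\alpha|x|^2)-4\alpha H_{\rho}\langle \Phi,\nu_t\rangle .
\]
Lemma \ref{Lemma:TraceInequality} with $\alpha>0$ yields $(|h|^2+2\alpha)(n+2\alpha|x|^2)\ge H_\rho^2$, which is exactly the claimed differential inequality.

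\emph{Step 3 (positivity).} Fix $x$ and consider $t\mapsto H_\rho(t,x)$ on a compact interval $[0,t_1]$ with $(t_1,x)\in\seg^0(\Sigma)$. The inequality gives
\[
\partial_t H_\rho\ge -4\alpha|\Phi|\,H_\rho - \text{(nothing negative from } H_\rho^2)\ge -C\,H_\rho
\]
as long as $H_\rho\ge 0$, where $C$ bounds $4\alpha|\Phi|$ on the compact set. Since $H_\rho>0$ at $t=0$ by hypothesis, Gronwall gives $H_\rho(t,x)\ge H_\rho(0,x)e^{-Ct}>0$. The standard continuity argument then rules out a first zero, so $H_\rho^{\Sigma_t}$ stays positive and, as a composition of smooth maps, smooth.
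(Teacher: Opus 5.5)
Your proposal is correct and follows the paper's proof essentially step for step. It uses the same evolution equations for $H$ and $\nu$, the same reduction to $\partial_t H_\rho=\Delta^{\Sigma_t}_\rho\phi+(|h|^2-2\alpha)\phi$, and the same use of the hypersurface identity for $\phi$ and the trace inequality.

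The differences are cosmetic. For positivity, the paper uses the exact integrating factor: since $\partial_t\phi=-4\alpha\phi\langle x,\nu\rangle$, it gets $\partial_t(H_\rho/\phi)\ge H_\rho^2/\phi\ge 0$. You instead use Gronwall with a crude bound on $4\alpha|\Phi|$ together with a continuity argument, which is equally valid. Your Step 1 also justifies the normal speed $\phi$ for $t>0$, which the paper takes for granted.
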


 \begin{proof} 
The mean curvature $H^{\Sigma}_t$ and the normal vector fields satisfy the evolution
equation 
\begin{align*}
 \frac{\partial}{\partial t} H^{\Sigma_t}
 = \Delta^{\Sigma_t} \phi + |h^{\Sigma_t}|^2 \phi
 \quad 
 \text{and}
 \quad 
 \frac{\partial}{\partial t} \nu^{\Sigma_t}
 = D^{\Sigma_t} \phi,
\end{align*}
compare for example \cite[Theorem 3.2]{Huisken-Polden-1999-EvolutionEquations}.
From the definition of the weighted mean curvature and the weighted Laplacian
we then deduce that 
  \begin{align*}
   \partial_t H_{\rho}
   &= \partial_t \left( H + 2 \alpha \langle x, \nu \rangle \right) \\
   &= \partial_t H + 2 \alpha \left( \langle \partial_t x, \nu \rangle + \langle x, \partial_t \nu \rangle \right) \\
   &= \Delta^{\Sigma} \phi + |h|^2 \phi + 2 \alpha \left( \langle -\phi \nu, \nu \rangle +
   \langle x, \nabla^{\Sigma} \phi \rangle \right) \\
   &= \Delta^{\Sigma} \phi + 2 \alpha \langle x, \nabla^{\Sigma} \phi \rangle + (|h|^2 - 2 \alpha) \phi \\
   &= \Delta^{\Sigma}_{\rho} \phi + (|h|^2 - 2 \alpha) \phi.
  \end{align*}
  
  The trace inequality for the second fundamental form for the expander weight, 
  Lemma \ref{Lemma:TraceInequality}, and Lemma \ref{Lemma:HypersurfaceIdentity} now imply that 
  \begin{align*}
   \partial_t H_{\rho}
   &= 4 \alpha (n + 2 \alpha |x|^2 - H_{\rho} \langle x, \nu \rangle) + (|h|^2 - 2 \alpha)(n + 2 \alpha |x|^2) \\
   &= (|h|^2 + 2 \alpha)(n + 2 \alpha |x|^2) - 4 \alpha H_{\rho} \langle x, \nu \rangle \\
   &\geq H_{\rho}^2 - 4 \alpha H_{\rho} \langle x, \nu \rangle.
  \end{align*}
  
 We deduce that 
\begin{align*}
 \frac{\partial}{\partial t}  \left( \frac{H_{\rho}^{\Sigma_t}}{\phi} \right)
 = 
 \frac{(\partial_t H_{\rho}^{\Sigma_t}) \phi - H_{\rho}^{\Sigma_t} \partial_t \phi}{\phi^2}
 =
 \frac{(H_{\rho}^{\Sigma_t})^2 \phi - 4 \alpha H_{\rho}^{\Sigma_t} \langle x, \nu \rangle \phi + H_{\rho}^{\Sigma_t} (4\alpha \langle x, - \phi \nu \rangle)}{\phi^2}
 = \frac{(H_{\rho}^{\Sigma_t})^2}{\phi} \geq 0.
\end{align*}

Since $\phi > 0$ on $\R^{n+1}$ the initial condition $H_{\rho}^{\Sigma} > 0$
implies that $H_{\rho}^{\Sigma_t} > 0$ for all $t \in [0,T)$.
This concludes the proof.
 \end{proof}

 \begin{proposition}[Evolution of area] \ \\
 The weighted area element $d\sigma_{\rho}^{\Sigma_t}$ satisties the differential inequality
 \begin{align*}
  \frac{\partial}{\partial t} d\sigma_{\rho}^{\Sigma_t} = - \phi H_{\rho}^{\Sigma_t} \, d\sigma_{\rho}^{\Sigma_t}.
 \end{align*}
 Moreover, the weighted area $t \mapsto A_{\rho}(\Sigma_t)$ is monotone decreasing.
 \label{Proposition:Evolution-WeightedArea}
 \end{proposition}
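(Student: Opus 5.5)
We prove the evolution identity for the unweighted area element first, then account for the weight, and finally integrate over the shrinking hypersurfaces.

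\emph{Step 1: unweighted area element.} By the proposition on parallel surfaces, on $\seg^0(\Sigma)$ the map $\Phi$ is a diffeomorphism. The surfaces $\Sigma_t$ therefore move with Euclidean normal velocity $\partial_t \Phi = -\phi \nu_t$, since $t\mapsto\Phi(t,x)$ is a unit speed $g$-geodesic and $g=\phi^{-2}\delta$. The standard first variation of the area element for a normal variation with speed $f$, namely $\partial_t d\sigma = fH\,d\sigma$, gives with $f=-\phi$
\[
 \partial_t d\sigma^{\Sigma_t} = -\phi H^{\Sigma_t}\, d\sigma^{\Sigma_t}.
\]
This is the same formula underlying the evolution equations of \cite[Theorem 3.2]{Huisken-Polden-1999-EvolutionEquations} already used in Proposition \ref{Proposition:Evolution-WeightedMeanCurvature}.

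\emph{Step 2: the weight.} Since $d\sigma_\rho = \exp(\alpha|x|^2)\,d\sigma$, we compute
\[
 \partial_t \exp(\alpha|\Phi|^2) = 2\alpha \langle \Phi, \partial_t\Phi\rangle \exp(\alpha|\Phi|^2) = -2\alpha\phi\langle x,\nu_t\rangle \exp(\alpha|\Phi|^2).
\]
Adding this to Step 1, the two contributions combine into $-\phi\,(H + 2\alpha\langle x,\nu_t\rangle)\,d\sigma_\rho = -\phi H_\rho^{\Sigma_t}\, d\sigma_\rho^{\Sigma_t}$, which is the claimed identity.

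\emph{Step 3: monotonicity.} We have $\phi>0$ everywhere, and $H_\rho^{\Sigma_t}>0$ by Proposition \ref{Proposition:Evolution-WeightedMeanCurvature}. Hence for each fixed $x$ the weighted area element decreases along $t$ as long as $(t,x)\in\seg^0(\Sigma)$. Write
\[
 A_\rho(\Sigma_t)=\int_{\{x:(t,x)\in\seg^0(\Sigma)\}} \Phi_t^* d\sigma_\rho .
\]
By property (1) of the segment domain, the index set $\{x:(t,x)\in\seg^0(\Sigma)\}$ is nonincreasing in $t$. The integrand is nonnegative and pointwise decreasing, so $A_\rho(\Sigma_t)$ is monotone decreasing.

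The only real subtlety is this last step. The domain of integration shrinks as points leave the segment domain, so monotonicity has to be argued via the pullback to $\Sigma$ rather than by differentiating a fixed integral. Both the shrinking of the domain and the sign of the integrand push in the same direction, which is what makes the conclusion immediate once the pullback formulation is used.
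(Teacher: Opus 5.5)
Your proposal is correct and follows the paper's proof: the same Huisken--Polden area-element formula with normal speed $-\phi$, combined with the derivative $-2\alpha\phi\langle x,\nu_t\rangle$ of the weight, gives $-\phi H_\rho^{\Sigma_t}\,d\sigma_\rho^{\Sigma_t}$, and monotonicity then follows from $\phi>0$ and $H_\rho^{\Sigma_t}>0$. Your Step 3 pulls back to $\Sigma$ and uses that the slices $\{x:(t,x)\in\seg^0(\Sigma)\}$ shrink in $t$; this makes explicit the shrinking-domain issue that the paper compresses into ``integration yields''.
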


 \begin{proof}
 The volume element evolves by the equation
 \begin{align*}
  \frac{\partial}{\partial t} d\sigma^{\Sigma_t}
   = - \phi H \, d\sigma^{\Sigma_t},
 \end{align*}
 compare \cite[Theorem 3.2]{Huisken-Polden-1999-EvolutionEquations}.
 This implies for the weighted volume element
\begin{align*}
 \frac{\partial}{\partial t} d\sigma_{\rho}^{\Sigma_t}
 = - \phi H^{\Sigma_t} (\rho \; d\sigma^{\Sigma_t}) 
 + (\alpha \partial_t |x|^2) (\rho \; d\sigma^{\Sigma_t}) 
 = - \phi (H^{\Sigma_t} + 2 \alpha \langle x, \nu \rangle)  d\sigma_{\rho}^{\Sigma_t} 
 = - \phi H_{\rho}^{\Sigma_t} \; d\sigma_{\rho}^{\Sigma_t}.
\end{align*}
Since $H_{\rho}^{\Sigma_t} > 0$ by the previous proposition and $\phi > 0$ by definition
integration yields that the weighted area is monotone decreasing.
\end{proof}

For $t \in [0,T)$ we consider the Heintze--Karcher type quantity, compare \cite[p.~257]{Brendle-2013-Alexandrov-WarpedProducts},
\begin{align*}
 Q(t) = \int_{\Sigma_t} \frac{n + 2 \alpha |x|^2}{H_{\rho}^{\Sigma_t}} \; d\sigma_{\rho}^{\Sigma_t}.
\end{align*}
The combination of Proposition \ref{Proposition:Evolution-WeightedMeanCurvature} and \ref{Proposition:Evolution-WeightedArea} already shows that the above quantity
is non-increasing. However, we need the following refined estimate:

 \begin{proposition}[Evolution of Heintze--Karcher quantity] \ \\
  The Heintze--Karcher quantity $t \mapsto Q(t)$ satisfies for $t \in [0,T)$
  the inequality
  \begin{align*}
   Q(0) - Q(t) \geq \int_{d_g(\cdot, \Sigma) \leq t} \left(n+1+2\alpha |x|^2 \right) \, dx_{\rho}.
  \end{align*}
  \label{Proposition:Evolution-HeintzeKarcherInequality}
 \end{proposition}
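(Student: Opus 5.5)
We follow Brendle's argument \cite[p.~257]{Brendle-2013-Alexandrov-WarpedProducts} and work on the parametrization domain rather than on $\Sigma_t$ directly. For $x\in\Sigma$ we write the pulled-back weighted area element as
\[
\Phi(t,\cdot)^{*}\,d\sigma_\rho^{\Sigma_t}=J(t,x)\,d\sigma_\rho(x)
\]
along the normal geodesics. We then express
\[
Q(t)=\int_{\{x:(t,x)\in\seg^0(\Sigma)\}}\frac{\phi}{H_\rho}(t,x)\,J(t,x)\,d\sigma_\rho(x).
\]
The goal is a pointwise-in-$x$ inequality for the derivative of the integrand, followed by a coarea argument.

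\textbf{Step 1: pointwise derivative bound.} Using Proposition \ref{Proposition:Evolution-WeightedMeanCurvature} and Proposition \ref{Proposition:Evolution-WeightedArea}, together with $\partial_t\phi=4\alpha\langle x,-\phi\nu\rangle=-4\alpha\phi\langle x,\nu\rangle$, we compute
\[
\partial_t\Big(\frac{\phi}{H_\rho}J\Big)=\frac{\partial_t\phi}{H_\rho}J-\frac{\phi\,\partial_tH_\rho}{H_\rho^2}J-\phi^2J .
\]
Inserting $\partial_tH_\rho\ge H_\rho^2-4\alpha H_\rho\langle x,\nu\rangle$, the two terms involving $\langle x,\nu\rangle$ cancel. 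This leaves
\[
\partial_t\Big(\frac{\phi}{H_\rho}J\Big)\le-(\phi+\phi^2)J .
\]
The bound $-\phi^2 J$ alone would already suffice to recover $n+2\alpha|x|^2$ in the volume term, so we should check the exact constant. We want $\phi^2 J$ per unit $t$, because the Euclidean volume element is $dx=\phi\,dt\,d\sigma\cdot(\text{Jacobian})$. Here $|\partial_t\Phi|=\phi$ in Euclidean terms, so $dx_\rho=\phi\,J\,dt\,d\sigma_\rho$. The target integrand is therefore $(n+1+2\alpha|x|^2)\phi J=(\phi+1)\phi J$, which is exactly what the sharper computation gives. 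I would redo this carefully: the precise form of $\partial_tH_\rho$ from the proof of Proposition \ref{Proposition:Evolution-WeightedMeanCurvature}, namely $(|h|^2+2\alpha)\phi-4\alpha H_\rho\langle x,\nu\rangle$, combined with Lemma \ref{Lemma:TraceInequality}, yields $\partial_tH_\rho\ge H_\rho^2/\ldots$. The issue is whether the trace inequality bounds $(|h|^2+2\alpha)\phi$ below by $H_\rho^2$ or by something larger, such as $H_\rho^2(1+1/\phi)$ or $H_\rho^2+H_\rho^2/n$ coming from the $(n+1)$-dimensional normalization. Securing the extra $+1$, i.e.\ a bound of the form $\partial_t(\phi J/H_\rho)\le-(n+1+2\alpha|x|^2)\phi J$, is the main obstacle. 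I would first try a sharpened version of Lemma \ref{Lemma:TraceInequality}. Concretely, I would use the Cauchy--Schwarz inequality $|h|^2\ge H^2/n$ weighted differently: pair the $n$ principal directions with the "extra direction" carrying $2\alpha$ and $2\alpha|x|^2$, in the spirit of the classical Heintze--Karcher step $(1-tH/n)^n$.

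\textbf{Step 2: integrate in $x$ and $t$.} Since $\seg^0(\Sigma)$ is open and, by property (1) of the segment-domain proposition, shrinks in $t$, the integrand is only removed as $t$ increases. Because it is nonnegative (as $H_\rho>0$), we get
\[
Q(0)-Q(t)\ge\int_0^t\!\!\int_{\{x:(s,x)\in\seg^0\}}(n+1+2\alpha|\Phi|^2)\,\phi J\,d\sigma_\rho\,ds .
\]

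\textbf{Step 3: change of variables.} Since $\Phi|_{\seg^0}$ is a diffeomorphism, and $\seg\setminus\seg^0$ has measure zero (its image is the cut locus, which has measure zero), the last integral equals
\[
\int_{\{d_g(\cdot,\Sigma)\le t\}}(n+1+2\alpha|x|^2)\,dx_\rho .
\]
This uses property (2) of the segment-domain proposition, $\Phi(\seg)=\overline\Omega$, together with $u(\Phi(s,x))=s$ on $\seg$. This gives the claim.
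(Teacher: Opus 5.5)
Your argument is correct and is essentially the paper's. The bound you display in Step 1, $\partial_t(\phi J/H_\rho)\le-(\phi+\phi^2)J$, is exactly the paper's computation: the $\langle x,\nu\rangle$ terms cancel, $-\phi\,\partial_tH_\rho/H_\rho^2\le-\phi$, and $\frac{\phi}{H_\rho}\partial_tJ=-\phi^2J$. Steps 2--3 are the Brendle-style segment-domain version of the paper's monotone-FTC-plus-coarea step. In that version, points leaving $\seg^0(\Sigma)$ are controlled by nonnegativity of the integrand, and the coarea formula for $d_g(\cdot,\Sigma)$ is replaced by a change of variables under the diffeomorphism $\Phi|_{\seg^0(\Sigma)}$.

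However, the ``main obstacle'' you raise afterwards does not exist, and that discussion should be removed. Since $\phi+\phi^2=(1+\phi)\phi=(n+1+2\alpha|x|^2)\phi$, the extra $+1$ is precisely the $-\phi J$ term you already obtained.

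Moreover, the sharpened trace inequality you propose to look for is false. On spheres centered at the origin the right-hand side in Lemma~\ref{Lemma:TraceInequality} vanishes, so $(|h|^2+2\alpha)\phi=H_\rho^2$ exactly there. Hence no lower bound such as $H_\rho^2(1+1/\phi)$ or $H_\rho^2(1+1/n)$ can hold. This is consistent with centered spheres attaining equality in Theorem~\ref{Theorem:HeintzeKarcherInequality}, which any strictly stronger pointwise bound would contradict.
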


 \begin{proof}
 The monotonicity of the function $t \mapsto Q(t)$
 and the evolution of the weighted mean curvature, Proposition \ref{Proposition:Evolution-WeightedMeanCurvature}, imply for $t \in (0,T)$ that 
 \begin{align*}
  &\limsup_{h \rightarrow 0} \frac{Q(t) - Q(t-h)}{h} \\
  \leq&
 \int_{\Sigma_t}
 \frac{\partial}{\partial t}
 \left(
  \frac{n + 2 \alpha |x|^2}{H_{\rho}}
 \right) 
 \; d\sigma_{\rho}
 +
 \int_{\Sigma_t} 
 \frac{n + 2 \alpha |x|^2}{H_{\rho}} \frac{\partial}{\partial t} d\sigma_{\rho} \\
 =&
 \int_{\Sigma_t} 
 \frac{2\alpha H_{\rho} \partial_t |x|^2 -(n+2\alpha |x|^2) \partial_t H_{\rho})}{H_{\rho}^2} \; d\sigma_{\rho} 
 +
 \int_{\Sigma_t}
 \frac{n + 2 \alpha |x|^2}{H_{\rho}} \left( - H_{\rho}(n + 2 \alpha |x|^2) \right) \; d\sigma_{\rho}
 \\
 \leq& 
 \int_{\Sigma_t}
 \frac{- 4 \alpha \phi H_{\rho} \langle x, \nu \rangle
 - (n + 2 \alpha |x|^2) (H_{\rho}^2 - 4 \alpha H_{\rho} \langle x, \nu \rangle)}{H_{\rho}^2} \; d\sigma_{\rho} 
  - \int_{\Sigma_t} (n + 2 \alpha |x|^2)^2 \; d\sigma_{\rho} \\
  =&
  - \int_{\Sigma_t} (n+1 + 2 \alpha |x|^2) \phi \; d\sigma_{\rho}.
\end{align*}

The Fundamental Theorem of Calculus for monotone functions
and the coarea formula then imply
\begin{align*}
Q(0) - Q(\tau)
 &\geq \int_{0}^{t} \left( \int_{\Sigma_t} (n+1 + 2\alpha |x|^2) \phi \; d\sigma_{\rho} \right) \; dt \\
 &= \int_{d_g(\cdot, \Sigma) \leq t} (n + 1 + 2\alpha |x|^2) \; dx_{\rho}.
\end{align*}
This proves the inequality.
 \end{proof}

 \begin{proof}[Proof of weighted Heintze--Karcher inequality, Theorem \ref{Theorem:HeintzeKarcherInequality}] \ \\
  Since $Q(t) > 0$ for all $t \in [0,T)$ the evolution of the weighted Heintze--Karcher quantity, Proposition \ref{Proposition:Evolution-HeintzeKarcherInequality}, implies that 
  \begin{align*}
   \int_{\Sigma} \frac{n + 2 \alpha |x|^2}{H_{\rho}^{\Sigma}} \; d\sigma_{\rho}
   = Q(0) \geq Q(0) - Q(\tau) 
   \geq \int_{d_g(\cdot, \Sigma) \leq t} (n + 1 + 2\alpha |x|^2) \; dx_{\rho}.
  \end{align*}
 Passing to the limit as $t \rightarrow T$ yields 
 \begin{align*}
  \int_{\Sigma} \frac{n + 2 \alpha |x|^2}{H_{\rho}^{\Sigma}}
  \geq \int_{\Omega} (n + 1 + 2\alpha |x|^2) \; dx_{\rho}.
 \end{align*}
 This shows the inequality.
 \end{proof}

 \begin{proof}[Proof of Alexandrov's Theorem, Theorem \ref{Theorem:AlexandrovTheorem-ExpanderWeight}] \ \\
    For $H_{\rho} = const > 0$ we combine the weighted Heintze--Karcher inequality,
    Theorem \ref{Theorem:HeintzeKarcherInequality} and the weighted Minkowski formula,
    Lemma \ref{Lemma:WeightedMinkowskiFormula}, to deduce
  \begin{align*}
   \int_{\Omega} (n+1 + 2 \alpha |x|^2) \; dx_{\rho}
   &\leq \int_{\Sigma} \frac{n + 2 \alpha |x|^2}{H_{\rho}} \; d\sigma_{\rho}
   = \frac{1}{H_{\rho}} \int_{\Sigma} (n + 2 \alpha |x|^2) \; d\sigma_{\rho} \\
   &= \frac{1}{H_{\rho}} \int_{\Sigma} H_{\rho} \langle x, \nu \rangle \; d\sigma_{\rho}
   = \int_{\Sigma} \langle x, \nu \rangle \; d\sigma_{\rho}
   = \int_{\Omega} (n+1 + 2 \alpha |x|^2) \; dx_{\rho}.
  \end{align*}
  This enforces equality in the weighted Heintze--Karcher inequality,
  Theorem \ref{Theorem:HeintzeKarcherInequality}. Therefore, we need
  to have equality in the geometric inequality for the expander weight, Lemma \ref{Lemma:TraceInequality}.
  Therefore $\Sigma$ is a sphere centered at the origin.
 \end{proof}

\appendix 

\section{Generalization to spaces of constant negative curvature and more general weights}
 
 In this appendix we discuss a generalization of the Alexandrov theorem to more general weights. We start by proving a generalization of the Minkowski identity
 for expander space, Lemma \ref{Lemma:WeightedMinkowskiFormula},
 to general weights in simply--connected spaces of constant nonpositive curvature.
 
 \begin{proposition}[Generalized weighted Minkowski formula] \ \\
  Let $n\geq 2$ and  $\kappa\leq 0$. Suppose $\Sigma^n$ is a closed and embedded submanifold of the weighted space $(M_\kappa^{n+1},\delta_\kappa,\rho\,dx)$. Then we have the Minkowski identity
\begin{equation}
    \int_{\Sigma} H_{\rho} \, \langle \sn_{\kappa}(r) \partial_r,\nu \rangle \,d\sigma_{\rho}
    = \int_{\Sigma} \left( n \cs_{\kappa}(r)+\sn_{\kappa}(r)
    \langle D\log \rho, \partial_r \rangle \right) \, d\sigma_{\rho}.
\end{equation}
  \label{Lemma:WeightedMinkowskiFormula-Generalized}
 \end{proposition}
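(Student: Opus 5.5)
We mimic the proof of Lemma \ref{Lemma:WeightedMinkowskiFormula}, replacing $\frac12|x|^2$ by the potential of the conformal Killing field of the space form. Set
\[
 \Lambda(r) = \int_0^r \sn_{\kappa}(s)\,ds ,
\]
so that $D\Lambda = \sn_{\kappa}(r)\partial_r =: X$. Since $\cs_\kappa' = -\kappa\sn_\kappa$, we have $\Lambda = (1-\cs_\kappa)/\kappa$ for $\kappa<0$ and $\Lambda = r^2/2$ for $\kappa = 0$; in both cases $\Lambda$ is smooth on all of $M^{n+1}_\kappa$, including at the origin.

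\textbf{Step 1: the ambient Hessian.} The classical fact for the warped product $dr^2 + \sn_\kappa^2 \delta_{\Sphere^n}$ is
\[
 D^2 \Lambda = \cs_{\kappa}(r)\,\delta_\kappa .
\]
To verify it, note that in the radial direction $\partial_r^2\Lambda = \sn_\kappa' = \cs_\kappa$. In the spherical directions the Hessian equals $\sn_\kappa(r)\cdot \frac{\sn_\kappa'}{\sn_\kappa}$ times the metric, since the level sets of $r$ have second fundamental form $\frac{\cs_\kappa}{\sn_\kappa}\,\delta_\kappa|_{T}$. The mixed terms vanish.

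\textbf{Step 2: restriction to $\Sigma$.} Restricting to $\Sigma$ and taking the trace gives
\[
 \Delta^{\Sigma}\Lambda = \trace_\Sigma D^2\Lambda - H\langle D\Lambda,\nu\rangle = n\cs_\kappa(r) - H\langle \sn_\kappa(r)\partial_r,\nu\rangle .
\]
Here the sign convention $h(X,Y)=\delta_\kappa(D_X\nu,Y)$ with $\nu$ the outward normal matches the Euclidean identity $\frac12\Delta^\Sigma|x|^2 = n - H\langle x,\nu\rangle$ used earlier.

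\textbf{Step 3: the weighted Laplacian.} Adding the drift term gives
\[
 \Delta^\Sigma_\rho \Lambda = \Delta^\Sigma\Lambda + \langle D^\Sigma \log\rho, D^\Sigma\Lambda\rangle .
\]
Decompose the gradient term into tangential and normal parts:
\[
 \langle D^\Sigma\log\rho, D^\Sigma \Lambda\rangle = \langle D\log\rho, X\rangle - \langle D\log\rho,\nu\rangle\langle X,\nu\rangle .
\]
Combining the two displays yields
\[
 \Delta^\Sigma_\rho \Lambda = n\cs_\kappa(r) + \sn_\kappa(r)\langle D\log\rho,\partial_r\rangle - H_\rho\,\langle \sn_\kappa(r)\partial_r,\nu\rangle .
\]

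\textbf{Step 4: integration.} Since $\Sigma$ is closed, the weighted divergence theorem gives $\int_\Sigma \Delta^\Sigma_\rho\Lambda\, d\sigma_\rho = \int_\Sigma \divA^\Sigma(\rho D^\Sigma\Lambda)\,d\sigma = 0$. Integrating the identity of Step 3 then produces exactly the claimed formula.

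The only step requiring care is Step 1, the Hessian identity, and in particular the smoothness of $\Lambda$ at the origin $o$, where $\partial_r$ is undefined. The explicit closed forms of $\Lambda$ above settle smoothness. The identity itself can also be checked via the characterization of $X$ as a closed conformal field, $D_Y X = \cs_\kappa Y$, verified separately for $Y=\partial_r$ and for $Y$ tangent to the geodesic spheres. The remaining steps are the same algebra as in the Euclidean case. Embeddedness and radial symmetry of $\rho$ are not needed for this identity; only closedness of $\Sigma$ is used.
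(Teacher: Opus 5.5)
Your proof is correct and follows essentially the same route as the paper: the same potential $\int_0^r \sn_{\kappa}(s)\,ds$, the identity $\Delta^{\Sigma}\Lambda = n\cs_{\kappa}(r) - H\sn_{\kappa}(r)\langle \partial_r,\nu\rangle$, the same tangential/normal splitting of the drift term, and the weighted divergence theorem. The paper checks that identity in an adapted frame using $D_X\partial_r$, whereas you package it as the Hessian identity $D^2\Lambda = \cs_{\kappa}(r)\,\delta_\kappa$; your remark on the smoothness of $\Lambda$ at the origin is a point the paper leaves implicit.
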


\begin{proof}
 We define $\Phi(r) = \int_0^r \sn_{\kappa}(s) \; ds$, and denote by $\partial_r$ the radial vector field. Then we have the following well-known identity
 \begin{align*}
   \Delta^{\Sigma} \Phi
   = n \cs_{\kappa}(r) - H \sn_{\kappa}(r) \langle \partial_r, \nu \rangle.
 \end{align*}
For the convenience of the reader, we will provide a proof: 
Suppose $p \in \Sigma$ and $\{e_i\}_{i=1}^{n+1}$ is an orthonormal frame adapted
to $\Sigma$. By the Fundamental Theorem
of Calculus we deduce
\begin{align*}
 D_{e_i} \Phi(r) = \sn_{\kappa}(r) \langle \partial_r, e_i \rangle.
\end{align*}
Taking another derivative yields
\begin{align*}
 D_{e_i} (D_{e_i} \Phi(r))
 &= D_{e_i} (\sn_{\kappa}(r)) \langle \partial_r, e_i \rangle
 + \sn_{\kappa}(r) \langle D_{e_i} \partial_r, e_i \rangle
 + \sn_{\kappa}(r) \langle \partial_r, D_{e_i} e_i \rangle \\
 &= \cs_{\kappa}(r) \langle \partial_r, e_i \rangle^2
 + \sn_{\kappa}(r) \left \langle
 \frac{\cs_{\kappa}(r)}{\sn_{\kappa}(r)} (e_i - \langle e_i, \partial_r \rangle \partial_r)
 , e_i \right \rangle
 - \sn_{\kappa}(r) h(e_i, e_i) \langle \partial_r, \nu \rangle \\
 &= \cs_{\kappa}(r) - \sn_{\kappa}(r) h(e_i, e_i) \langle \partial_r, \nu \rangle,
\end{align*}
where we have used the identity
\begin{align*}
 \nabla_X \partial_r 
 = \frac{\cs_{\kappa}(r)}{\sn_{\kappa}(r)} (X - \langle \partial_r, X \rangle X)
\end{align*}
for the covariant derivative of the radial vector field. Summations over $i \in \{1, \dots, n\}$
yields the identity.

Combining the above identity with the definition of the weighted Laplacian yields
\begin{align*}
 \Delta^{\Sigma}_{\rho} \Phi
 &=
 \Delta^{\Sigma} \Phi
 + \langle D^{\Sigma} \log \rho, D^{\Sigma} \Phi \rangle \\
 &=
 n \cs_{\kappa}(r) - \sn_{\kappa}(r) H \langle \partial_r, \nu \rangle 
 +
 \sum_{i=1}^n D_{e_i} \log \rho \, \sn_{\kappa}(r) \langle \partial_r, e_i \rangle \\
 &=
n \cs_{\kappa}(r) - \sn_{\kappa}(r) H \langle \partial_r, \nu \rangle 
+ \sn_{\kappa}(r) \langle D \log \rho, \partial_r \rangle 
- \sn_{\kappa}(r) \langle D \log \rho, \nu \rangle \langle \partial_r, \nu \rangle \\
&=
 n \cs_{\kappa}(r) + \sn_{\kappa}(r) \langle D \log \rho, \partial_r \rangle 
 - \sn_{\kappa}(r) H_{\rho} \langle \partial_r, \nu \rangle.
\end{align*}
Multiplication by $\rho$, and the divergence theorem implies the desired identity. 
\end{proof}

 The next ingredient is a generalization of the trace inequality
 for the second fundamental for the expander weight,
 Lemma \ref{Lemma:TraceInequality}, which relates the second fundamental form
 to the weighted mean curvature.
 
 \begin{lemma}[Generalized trace inequality] \ \\
  Let $n\geq 2$. Suppose $F: \Sigma^n \rightarrow M^{n+1}$ is an immersion
  of the closed $n$-manifold $\Sigma^n$ into the weighted space
  $(M_\kappa^{n+1},\delta_\kappa,\exp(\psi(r(x)))\,dx)$. 
  We define the radial projection
  $\nu^{\perp} = \langle \nu, \partial_r \rangle \, \partial_r$
  and its complement $\nu^{\parallel} = \nu - \nu^{\perp}$.
 Then we have the following pointwise identity  
    \begin{equation}
        \begin{aligned}
        &\left(
        |h|^2+ \psi''(r) |\nu^{\parallel}|^2 + \psi'(r)\frac{\cs_{\kappa}(r)}{\sn_{\kappa}(r)}|\nu^{\perp}|^2
        \right)
        \left( n\cs_{\kappa}(r)+ \psi'(r) \sn_{\kappa}(r) \right) - \cs_{\kappa}(r) H_{\rho}^2\\
        =& |\overset{\circ}{h}|^2 \left( n\cs_{\kappa}(r) + \psi'(r)\sn_{\kappa}(r) \right)
        + \psi'(r) \sn_{\kappa}(r) \frac{H^2}{n} |\nu^{\parallel}|^2
        +\frac{\psi'(r)}{n\sn_{\kappa}(r)}
        \left(
        \sn_{\kappa}(r) H \langle \nu, \partial_r \rangle - n\cs_{\kappa}(r) 
        \right)^2\\
        +&\left(
        \psi''(r) \left(n\cs_{\kappa}(r) + \psi'(r)\sn_{\kappa}(r) \right)
        - n\psi'(r)\frac{\cs_{\kappa}(r)^2}{\sn_{\kappa}(r)}
        \right)
        |\nu^{\parallel}|^2.
        \end{aligned}
    \end{equation}
    Moreover, the right-hand side is equal to zero on a geodesic sphere centered at $o$.
    \label{Lemma:TraceInequality-Generalized}
 \end{lemma}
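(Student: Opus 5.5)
The identity is purely pointwise and algebraic, so I plan to expand both sides in a few scalar quantities and compare them term by term. Fix a point of $\Sigma$ and abbreviate $s = \sn_{\kappa}(r)$, $c = \cs_{\kappa}(r)$, $a = \psi'(r)$, $b = \psi''(r)$ and $p = \langle \nu, \partial_r \rangle$.

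\emph{Setup.} Since $\nu^{\perp} = p\,\partial_r$ and $\partial_r$ is a unit vector, we have $|\nu^{\perp}|^2 = p^2$ and $|\nu^{\parallel}|^2 = 1 - p^2 =: q$. Because $\rho = \exp(\psi(r))$, we get $D \log \rho = \psi'(r)\, Dr = a\, \partial_r$, and therefore $H_{\rho} = H + a p$. The only geometric input is the trace decomposition $|h|^2 = |\overset{\circ}{h}|^2 + \frac{H^2}{n}$, exactly as in the proof of Lemma \ref{Lemma:TraceInequality}.

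\emph{Left-hand side.} Substituting, the left-hand side becomes
\[
\Bigl(|\overset{\circ}{h}|^2 + \tfrac{H^2}{n} + b q + a \tfrac{c}{s} p^2\Bigr)(nc + as) - c\bigl(H^2 + 2aHp + a^2p^2\bigr).
\]
I then group the terms as follows.
\begin{itemize}
\item The $|\overset{\circ}{h}|^2$ term is already the first summand on the right.
\item The $H^2$ terms combine to $\frac{H^2}{n}(nc+as) - cH^2 = \frac{as}{n}H^2$. I split this as $\frac{as}{n}H^2(q + p^2)$. The $q$-part is exactly the second summand on the right.
\item After the cancellation $a\frac{c}{s}p^2 \cdot as - c a^2 p^2 = 0$, what remains is
\[
\tfrac{as}{n}H^2p^2 - 2acHp + \tfrac{nac^2}{s}p^2 + bq(nc+as).
\]
\end{itemize}

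\emph{Right-hand side.} Expanding the square in the third summand gives
\[
\frac{a}{ns}\bigl(sHp - nc\bigr)^2 = \frac{as}{n}H^2p^2 - 2acHp + \frac{nac^2}{s}.
\]
The fourth summand equals $bq(nc+as) - \frac{nac^2}{s}q$.

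\emph{Comparison.} Adding the third and fourth summands, every term matches the remainder from the left-hand side except the constant pieces. These combine to $\frac{nac^2}{s}(1-q) = \frac{nac^2}{s}p^2$, which is exactly the corresponding left-hand term. This proves the identity.

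\emph{Vanishing on centered spheres.} On a geodesic sphere centered at $o$ with outward normal we have the following.
\begin{itemize}
\item $\nu = \partial_r$, so $\nu^{\parallel} = 0$ and $p = 1$; this kills the second and fourth summands.
\item The sphere is totally umbilic, so $\overset{\circ}{h} = 0$; this kills the first summand.
\item From $D_X \partial_r = \frac{c}{s}X$ on tangent vectors we get $h = \frac{c}{s}\,\delta_\kappa|_{\Sigma}$, hence $H = \frac{nc}{s}$. Then $sHp - nc = 0$, which kills the third summand.
\end{itemize}

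There is no real obstacle here. The only delicate point is the bookkeeping of the $p^2$ versus $q$ terms, in particular using $q + p^2 = 1$ to redistribute $\frac{as}{n}H^2$ and $\frac{nac^2}{s}$ between the third and fourth summands. I would do this carefully in the order described above.
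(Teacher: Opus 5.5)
Your proof is correct and follows essentially the same route as the paper: the trace decomposition $|h|^2 = |\overset{\circ}{h}|^2 + \frac{H^2}{n}$, splitting the leftover $H^2$-term using $|\nu^{\perp}|^2 + |\nu^{\parallel}|^2 = 1$, and completing the square. Your term-by-term bookkeeping checks out. Your check of the vanishing on centered spheres, using $\nu = \partial_r$, umbilicity and $H = n\cs_{\kappa}(r)/\sn_{\kappa}(r)$, supplies a detail the paper leaves implicit.
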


\begin{proof}
We decompose the second fundamental form into its tracefree part and the mean curvature.
   \begin{align*}
        &\left(
        |h|^2+\psi''(r)|\nu^{\parallel}|^2 + \psi'(r) \frac{\cs_{\kappa}(r)}{\sn_{\kappa}(r)}|\nu^{\perp}|^2
        \right)
        \left(
        n\cs_{\kappa}(r)+\psi'(r)\sn_{\kappa}(r)
        \right)
        -\cs_{\kappa}(r) H_{\rho}^2\\
        =& |\overset{\circ}{h}|^2
        \left( n\cs_{\kappa}(r)+\psi'(r)\sn_{\kappa}(r) \right)
        +\cs_{\kappa}(r) H^2 + \psi'(r)\sn_{\kappa}(r)\frac{H^2}{n} - \cs_{\kappa}(r) H_{\rho}^2\\
        +&\left(
        \psi''(r)|\nu^{\parallel}|^2+\psi'(r) \frac{\cs_{\kappa}(r)}{\sn_{\kappa}(r)}|\nu^{\perp}|^2
        \right)
        \left(
        n\cs_{\kappa}(r)+\psi'(r)\sn_{\kappa}(r)
        \right)\\
        =& |\overset{\circ}{h}|^2
        \left( n\cs_{\kappa}(r)+\psi'(r)\sn_{\kappa}(r) \right)
        +\cs_{\kappa}(r) H^2 + \psi'(r)\sn_{\kappa}(r)\frac{H^2}{n}(|\nu^\perp|^2+|\nu^\parallel|^2)\\
        -&\cs_{\kappa}(r) (H^2+2H\psi'(r)\langle\partial_r,\nu\rangle+\psi'(r)^2\langle\partial_r,\nu\rangle^2)\\
        +&\left(
        \psi''(r)|\nu^{\parallel}|^2+\psi'(r) \frac{\cs_{\kappa}(r)}{\sn_{\kappa}(r)}|\nu^{\perp}|^2
        \right)
        \left(
        n\cs_{\kappa}(r)+\psi'(r)\sn_{\kappa}(r)
        \right)\\
         =& |\overset{\circ}{h}|^2
        \left( n\cs_{\kappa}(r)+\psi'(r)\sn_{\kappa}(r) \right)
       + \psi'(r)\sn_{\kappa}(r)\frac{H^2}{n}|\nu^\parallel|^2\\
       +&\frac{\psi'(r)}{n\sn_\kappa(r)}(\sn_\kappa(r)H\langle\nu,\partial_r\rangle-n\cs_\kappa(r))^2-n\psi'(r)\frac{\cs_\kappa(r)^2}{\sn_\kappa(r)}\\
        -&\cs_{\kappa}(r)\psi'(r)^2\langle\partial_r,\nu\rangle^2\\
        +&\left(
        \psi''(r)|\nu^{\parallel}|^2+\psi'(r) \frac{\cs_{\kappa}(r)}{\sn_{\kappa}(r)}|\nu^{\perp}|^2
        \right)
        \left(
        n\cs_{\kappa}(r)+\psi'(r)\sn_{\kappa}(r)
        \right)\\
        =&|\overset{\circ}{h}|^2
        \left(
        n\cs_{\kappa}(r)+\psi'(r)\sn_{\kappa}(r)
        \right)
        +\psi'(r) \sn_{\kappa}(r) \frac{H^2}{n}|\nu^{\parallel}|^2 \\
        +&\frac{\psi'(r)}{n\sn_{\kappa}(r)}
        \left(
        \sn_{\kappa}(r) H \langle \nu, \partial_r \rangle - n\cs_{\kappa}(r)
        \right)^2
        +\left(
        \psi''(r) (n\cs_{\kappa}(r)+ \psi'(r)\sn_{\kappa}(r)) - n\psi'(r)\frac{\cs_{\kappa}(r)^2}{\sn_{\kappa}(r)}
        \right)|\nu^{\parallel}|^2.
    \end{align*}
This is the desired decomposition.
\end{proof}

In the proof of Alexandrov's Theorem for expander space, Theorem \ref{Theorem:AlexandrovTheorem-ExpanderWeight} the third ingredient was a generalization of the classical Heintze--Karcher inequality. This inequality holds for a more general weights defined by 
a differential inequality. 
 
 \begin{theorem}[Generalized Heintze--Karcher inequality] \ \\
  Let $n \geq 2$, $\kappa \leq 0$ and let $\rho: M^{n+1}_{\kappa} \rightarrow (0,\infty)$
 be a nonconstant positive radially symmetric weight, i.e.\ there exists $\psi: (0,\infty) \rightarrow \R$, such that $\rho(x) = \exp(\psi(r(x)))$, and $\psi$ additionally satisfies
 for all $r >0$ the inequalities $\psi'(r) \geq 0$ and 
 \begin{equation}
 A_{\kappa}(r) =  \frac{d}{dr} \left( \psi''(r) + \frac{(\psi'(r))^2}{2} + (n+1) \frac{\cs_{\kappa}(r) \psi'(r)}{\sn_{\kappa}(r)} + \kappa n \psi(r) 
  \right) \geq 0.
  \label{equation:Monotonicity}
 \end{equation}
 Suppose $\Sigma^n$ is a closed, connected and embedded submanifold in $(M^{n+1}_{\kappa}, \delta_k, \rho \, dx)$ with positive weighted mean curvature $H_{\rho} > 0$ bounding a non-empty open and connected
 subset $\Omega$ of $M^{n+1}_{\kappa}$.
Then we have the weighted Heintze--Karcher inequality
    \begin{equation}
        \int_{\Omega}
        \left( (n+1)\cs_{\kappa}(r) +\psi'(r) \sn_{\kappa}(r) \right) \,dx_{\rho}
        \leq \int_{\Sigma} \frac{n \cs_{\kappa}(r) +\psi'(r)\sn_{\kappa}(r)}{H_{\rho}} \, d\sigma_{\rho}.
    \end{equation}
  Moreover, if equality holds then $\Sigma$ is a round sphere centered at $o \in M^{n+1}_{\kappa}$.  
  \label{Theorem:HeintzeKarcherInequality-Generalized}
\end{theorem}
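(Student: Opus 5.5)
The plan is to run the argument of Section 4 verbatim, with $\phi = n + 2\alpha|x|^2$ replaced by the radial function
\[
\phi(r) = n\cs_{\kappa}(r) + \psi'(r)\sn_{\kappa}(r).
\]
This is exactly the integrand of the boundary side of the inequality. It is also $\Delta^{\Sigma}_{\rho}\Phi + \sn_{\kappa}H_{\rho}\langle\partial_r,\nu\rangle$ by the computation in Proposition \ref{Lemma:WeightedMinkowskiFormula-Generalized}. Moreover $\divA_{\rho}(\sn_{\kappa}(r)\partial_r) = (n+1)\cs_{\kappa}(r)+\psi'(r)\sn_{\kappa}(r)$, which is the volume integrand. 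Since $\psi' \geq 0$, we have $\phi \geq n\cs_{\kappa} > 0$.

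\textbf{Parallel surfaces and the evolution of $H_\rho$.} Consider the conformal metric $g = \phi^{-2}\delta_{\kappa}$ and the distance $u = d_g(\cdot,\Sigma)$ on $\overline\Omega$. Use the inward normal exponential map $\Phi(t,x)$ and the segment domains exactly as in the two propositions quoted from Brendle. This gives smooth parallel hypersurfaces $\Sigma_t$ moving with normal velocity $-\phi\nu$. As in Proposition \ref{Proposition:Evolution-WeightedMeanCurvature}, using $\partial_t H = \Delta^{\Sigma}\phi + (|h|^2 + \Ric(\nu,\nu))\phi$ with $\Ric(\nu,\nu)=n\kappa$, together with $\partial_t\nu = D^{\Sigma}\phi$, one obtains
\[
\partial_t H_{\rho} = \Delta^{\Sigma}_{\rho}\phi + \bigl(|h|^2 + n\kappa - D^2\psi(\nu,\nu)\bigr)\phi .
\]
Here $D^2\psi(\nu,\nu)$ splits into $\psi''$ times the radial part and $\psi'\cs_{\kappa}/\sn_{\kappa}$ times the spherical part of $|\nu|^2$.

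Next, compute $\Delta^{\Sigma}_{\rho}\phi$ explicitly, generalizing Lemma \ref{Lemma:HypersurfaceIdentity}. Since $\phi$ is radial, $\Delta^\Sigma\phi = \phi''|\partial_r^{\top}|^2 + \phi'(\cs_{\kappa}/\sn_{\kappa})(n-|\partial_r^\top|^2) - \phi' H\langle\partial_r,\nu\rangle$. Add the drift term $\psi'\phi'|\partial_r^\top|^2$ and rewrite $H = H_\rho - \psi'\langle\partial_r,\nu\rangle$.

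Insert Lemma \ref{Lemma:TraceInequality-Generalized}, whose first three right-hand terms are manifestly nonnegative. The aim is a differential inequality of the form
\[
\partial_t H_{\rho} \geq \frac{\cs_{\kappa}}{\phi}H_{\rho}^2 + \frac{\phi'}{\phi}\cdot(\text{normal velocity})\cdot H_{\rho} .
\]
The goal is the analogue of the identity $\partial_t(H_\rho/\phi) = H_\rho^2/\phi \geq 0$ from Section 4, i.e. $\partial_t(H_\rho/\phi)\geq 0$.

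\textbf{Main obstacle.} The key difficulty is showing that all leftover zeroth-order terms multiplying $|\partial_r^{\top}|^2$ have a sign. These come from $\phi''$, $\phi'\cs_\kappa/\sn_\kappa$, $\psi'\phi'$, $n\kappa\phi$, the $\psi''$ versus $\psi'\cs_\kappa/\sn_\kappa$ discrepancy in $D^2\psi(\nu,\nu)$, and the last bracket in Lemma \ref{Lemma:TraceInequality-Generalized}.

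In the expander case these cancel exactly. Here I expect them to combine, after using $\sn_\kappa'' = -\kappa\sn_\kappa$ and $\cs_\kappa'=-\kappa\sn_\kappa$, into $\sn_{\kappa}(r)\,A_{\kappa}(r)\,|\partial_r^{\top}|^2$ up to a positive factor. Indeed, differentiating $\psi''+\tfrac12\psi'^2+(n+1)\psi'\cs_\kappa/\sn_\kappa+\kappa n\psi$ produces precisely the combination $\psi'''+\psi'\psi''+\dots$ appearing in $\phi''$ and $\psi'\phi'$. So the hypothesis $A_{\kappa}\geq0$ is designed to make this term nonnegative. I would first verify this bookkeeping for $\kappa=0$, then track the extra $\kappa$ terms. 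Positivity of $H_\rho$ along $[0,T)$ then follows from $\partial_t(H_\rho/\phi)\geq 0$.

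\textbf{Monotone quantity and the inequality.} As in Proposition \ref{Proposition:Evolution-WeightedArea}, we have $\partial_t d\sigma_\rho = -\phi H_\rho\, d\sigma_\rho$. Set $Q(t)=\int_{\Sigma_t}\phi/H_\rho\, d\sigma_\rho$. Using the evolution inequality and $\partial_t\phi = -\phi'\phi\langle\partial_r,\nu\rangle$, the same computation as in Proposition \ref{Proposition:Evolution-HeintzeKarcherInequality} should give
\[
\limsup_{h\to 0}\frac{Q(t)-Q(t-h)}{h} \leq -\int_{\Sigma_t}\bigl((n+1)\cs_\kappa+\psi'\sn_\kappa\bigr)\phi\, d\sigma_\rho .
\]
Here the $\cs_\kappa H_\rho^2/\phi$ term is what converts $n\cs_\kappa$ into $(n+1)\cs_\kappa$. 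The coarea formula, with $|Du|_{\delta} = \phi^{-1}$, converts the right-hand side into the volume integral over $\{u\leq t\}$. Since $Q\geq 0$, letting $t\to T$ and using $\Phi(\seg(\Sigma))=\overline\Omega$ yields the stated inequality.

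\textbf{Equality case.} If equality holds, all discarded nonnegative terms vanish on $\Sigma$. In particular $|\overset{\circ}{h}|^2\phi\equiv0$, so $\Sigma$ is totally umbilic and hence a geodesic sphere. Moreover, either the $|\nu^{\parallel}|^2$-terms or the $\psi'$-square term forces $\nu$ to be radial. Here one uses that $\psi$ is nonconstant, so $\psi'>0$ somewhere and, by monotonicity, on an interval meeting $\Sigma$; I expect a short extra argument here. Hence the sphere is centered at $o$.
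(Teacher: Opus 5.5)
Your argument for the inequality follows the paper's proof: the same $\phi=n\cs_{\kappa}+\psi'\sn_{\kappa}$, the same conformal parallel surfaces, and the same evolution formula for $H_\rho$. Your bookkeeping guess is also right. The leftover terms carrying neither $H_\rho$ nor $|\nu^{\parallel}|^2$ cancel exactly, and the coefficient of $|\nu^{\parallel}|^2$ that remains is exactly $\sn_{\kappa}A_{\kappa}$.

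Your displayed target, however, is off by a factor $\phi$. The trace lemma subtracts $\cs_{\kappa}H_\rho^2$ from precisely the term $\bigl(|h|^2+\psi''|\nu^{\parallel}|^2+\psi'\tfrac{\cs_{\kappa}}{\sn_{\kappa}}|\nu^{\perp}|^2\bigr)\phi$ appearing in $\partial_t H_\rho$. So what you get, and what you need, is
\[
\partial_t H_\rho\geq \cs_{\kappa}H_\rho^2-H_\rho\langle D\phi,\nu\rangle ,\qquad\text{equivalently}\qquad \partial_t\Bigl(\frac{H_\rho}{\phi}\Bigr)\geq\frac{\cs_{\kappa}H_\rho^2}{\phi}.
\]
This gives $\partial_t(\phi/H_\rho)\leq-\cs_{\kappa}\phi$. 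Together with $\partial_t d\sigma_\rho=-\phi H_\rho\,d\sigma_\rho$ it produces the integrand $-((n+1)\cs_{\kappa}+\psi'\sn_{\kappa})\phi$. With $\cs_{\kappa}H_\rho^2/\phi$ in the inequality for $\partial_t H_\rho$, as you wrote it, you would only get $-\cs_{\kappa}$. The volume integrand would then be the weaker $\cs_{\kappa}/\phi+n\cs_{\kappa}+\psi'\sn_{\kappa}$.

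The genuine gap is the equality case, and no ``short extra argument'' can close it. The hypotheses do not force $\psi'>0$ anywhere near $\Sigma$; they allow $\psi'\equiv0$ on $(0,a]$. Here is a counterexample.
\begin{itemize}
\item Take $\kappa=0$ and a smooth nondecreasing $\eta\geq0$ vanishing exactly on $(0,a]$, e.g.\ $\eta(r)=e^{-1/(r-a)}$ for $r>a$.
\item Set $\psi'(r)=r^{-(n+1)}\int_a^r s^{n+1}\eta(s)\,ds$. Then $\psi'\leq \frac{r\eta}{n+2}$, so $\psi''=\eta-\frac{n+1}{r}\psi'\geq0$.
\item Moreover $\psi''+\frac{n+1}{r}\psi'+\frac12(\psi')^2=\eta+\frac12(\psi')^2$ is nondecreasing. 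Hence $A_0\geq0$, and $\rho$ is smooth and nonconstant.
\item Let $\Sigma=\partial B_R(p)$ be any round sphere with $|p|+R<a$. The weight equals a constant $\rho_0$ on $\overline\Omega$, $\phi\equiv n$, and $H_\rho=n/R$.
\item Both sides of the inequality then equal $(n+1)\rho_0|\Omega|$.
\end{itemize}
So off-center spheres attain equality, and they even contradict Theorem \ref{Theorem:AlexandrovTheorem-GeneralWeight}. The paper's sketch does not treat the equality case at all.

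Rigidity needs an additional assumption, e.g.\ $\psi'>0$ on $(0,\infty)$. Under that assumption your argument goes through. If a discarded nonnegative term were positive somewhere on $\Sigma$, it would stay positive for small $t$ and make the inequality strict. So equality forces $\overset{\circ}{h}\equiv0$ and $\psi'\sn_{\kappa}H^2|\nu^{\parallel}|^2/n\equiv0$. Since $H>0$ on a geodesic sphere, $\nu$ is radial and $\Sigma$ is centered at $o$.
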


 \begin{proof}[Sketch of the proof of Theorem \ref{Theorem:HeintzeKarcherInequality-Generalized}]
 The proof follows the same strategy as the proof for the expander weight.
 We define $\phi: M^{n+1}_{\kappa} \rightarrow (0,\infty)$ by
 \[
   \phi(x) =  n\cs_{\kappa} (r(x)) + \psi'(r(x)) \sn_{\kappa} (r(x)),
 \]
 and consider the normal geodesic flow $(\Sigma_t)_{t \in [0,T)}$ with respect to the conformal metric $g:=\phi^{-2}\delta_\kappa$ starting at $\Sigma_0=\Sigma$. Then
 we compute the gradient
\begin{equation*}
   D\phi = \left(\partial_r \phi(x) \right) \partial_r
   =\left( -n \kappa \sn_{\kappa}(r) +\psi'(r)\cs_{\kappa}(r)+ \psi''(r)\sn_{\kappa}(r) \right) \partial_r,
 \end{equation*}
 and the Hessian
 
 \begin{align*}
D^2\phi
=& \partial_r^2\phi(x) \partial_r \otimes \partial_r + \partial_r \phi(x) \frac{\cs_{\kappa}(r)}{\sn_{\kappa}(r)} (\id - \partial_r \otimes \partial_r)\\
=& \left(-n \kappa \cs_{\kappa}(r) - \kappa \psi'(r) \sn_{\kappa}(r) + 2\psi''(r) \cs_{\kappa}(r) + \psi'''(r)\sn_{\kappa}(r) \right) \left( \partial_r\otimes\partial_r \right)\\
&+\left(-n \kappa \cs(r) + \psi'(r) \frac{\cs_{\kappa}(r)^2}{\sn_{\kappa}(r)} + \psi''(r) \cs_{\kappa}(r)\right) (\id - \partial_r \otimes \partial_r)\\
=& -n\kappa\cs_\kappa(r)\id+\left(- \kappa \psi'(r) \sn_{\kappa}(r) + \psi''(r) \cs_{\kappa}(r) + \psi'''(r)\sn_{\kappa}(r) \right) \left( \partial_r\otimes\partial_r \right)\\
&+\cs_\kappa(r)\left( \psi''(r) \partial_r \otimes \partial_r + \psi'(r) \frac{\cs_{\kappa}(r)}{\sn_{\kappa}(r)} (\id - \partial_r \otimes \partial_r)\right)\\
&+\psi''(r) \cs_{\kappa}(r) (\id - \partial_r \otimes \partial_r)\\
=&(\psi''(r) - n\kappa) \cs_{\kappa}(r) \id + \cs_{\kappa}(r) D^2 \log \rho 
+ (\psi'''(r)- \kappa \psi'(r)) \sn_{\kappa}(r) (\partial_r \otimes \partial_r).
\end{align*}

 This implies for the weighted Laplace operator on the hypersurface $\Sigma_t$ the expression
\begin{align*}
\Delta_{\rho}^{\Sigma_t}\phi
=& \Delta^{M^{n+1}_{\kappa}} \phi - D^2\phi(\nu,\nu) -H_{\rho}^{\Sigma_t} \langle D\phi, \nu \rangle + \psi'(r) \langle \partial_r,  D\phi \rangle \\
=&(n+1) (\psi''(r) - n\kappa) \cs_{\kappa}(r) + \cs_{\kappa}(r) \Delta^{M^{n+1}_{\kappa}} \log \rho 
+ (\psi'''(r) - \kappa \psi'(r)) \sn_{\kappa}(r)\\
&- (\psi''(r)-n\kappa) \cs_{\kappa}(r) - \cs_{\kappa}(r) D^2 \log \rho(\nu,\nu)
- (\psi'''(r) - \kappa \psi'(r)) \sn_{\kappa}(r) |\nu^{\perp}|^2 \\
&+ \psi'(r) (-n \kappa \sn_{\kappa}(r) + \psi'(r) \cs_{\kappa}(r) +\psi''(r) \sn_{\kappa}(r)) 
- H_{\rho}^{\Sigma_t} \langle D\phi, \nu \rangle \\
=& (\psi''(r) - n\kappa) \phi +\cs_{\kappa}(r) \Delta^{M^{n+1}_{\kappa}} \log \rho
+ (\psi'''(r) - \kappa\psi'(r)) \sn_{\kappa}(r) |\nu^{\parallel}|^2 + (\psi'(r))^2\cs_{\kappa}(r)\\
&- \cs_{\kappa}(r) D^2 \log \rho(\nu,\nu) - H_{\rho}^{\Sigma_t} \langle D\phi, \nu \rangle.
 \end{align*}

We deduce for the evolution of the weighted mean curvature that
\begin{align*}
\partial_t H_{\rho}^{\Sigma_t}
=&\Delta^{\Sigma_t}_{\rho} \phi + (|h|^2+ n\kappa - D^2 \log \rho(\nu,\nu)) \phi \\
=&(|h|^2 + \psi''(r) - D^2 \log \rho(\nu,\nu)) \phi \\
&+\cs_{\kappa}(r) \Delta^{M^{n+1}_{\kappa}} \log \rho + (\psi'''(r)-\kappa\psi'(r)) \sn_{\kappa}(r) |\nu^{\parallel}|^2
+ (\psi'(r))^2 \cs_{\kappa}(r)\\
&-\cs_{\kappa}(r) D^2 \log \rho(\nu,\nu) - H_{\rho}^{\Sigma_t} \langle D\phi, \nu \rangle \\
=&\left(|h|^2 + \psi''(r)(|\nu^\perp|^2+|\nu^\parallel|^2) - \psi''(r)|\nu^\perp|^2-\frac{\cs_\kappa(r)}{\sn_\kappa(r)}|\nu^\parallel|^2\psi'(r)\right) \phi \\
&+\cs_{\kappa}(r) \left(\psi''(r)+n\frac{\cs_\kappa(r)}{\sn_\kappa(r)}\psi'(r)\right) + (\psi'''(r)-\kappa\psi'(r)) \sn_{\kappa}(r) |\nu^{\parallel}|^2
+ (\psi'(r))^2 \cs_{\kappa}(r)\\
&-\cs_{\kappa}(r) \left(\psi''(r)|\nu^\perp|^2+\frac{\cs_\kappa(r)}{\sn_\kappa(r)}|\nu^\parallel|^2\psi'(r)\right)- H_{\rho}^{\Sigma_t} \langle D\phi, \nu \rangle.
\end{align*}
Rearranging the terms yields
\begin{align*}
\partial_t H_{\rho}^{\Sigma_t}
=&\left(|h|^2 + \psi''(r)|\nu^\parallel|^2-\frac{\cs_\kappa(r)}{\sn_\kappa(r)}|\nu^\parallel|^2\psi'(r)\right) \phi \\
&+\cs_\kappa(r)\psi''(r)|\nu^\parallel|^2+n\frac{\cs^2_\kappa(r)}{\sn_\kappa(r)}\psi'(r) + (\psi'''(r)-\kappa\psi'(r)) \sn_{\kappa}(r) |\nu^{\parallel}|^2
+ (\psi'(r))^2 \cs_{\kappa}(r)\\
&-\frac{\cs_\kappa^2(r)}{\sn_\kappa(r)}|\nu^\parallel|^2\psi'(r)- H_{\rho}^{\Sigma_t} \langle D\phi, \nu \rangle \\
=&\left(|h|^2 + \psi''(r)|\nu^\parallel|^2-\frac{\cs_\kappa(r)}{\sn_\kappa(r)}|\nu^\parallel|^2\psi'(r)\right) \phi \\
&+\cs_\kappa(r)\psi''(r)|\nu^\parallel|^2+\frac{\cs_\kappa(r)}{\sn_\kappa(r)}\psi'(r)(n\cs_\kappa(r)+\psi'(r)\sn_\kappa(r)) + (\psi'''(r)-\kappa\psi'(r)) \sn_{\kappa}(r) |\nu^{\parallel}|^2
\\
&-\frac{\cs_\kappa^2(r)}{\sn_\kappa(r)}|\nu^\parallel|^2\psi'(r)- H_{\rho}^{\Sigma_t} \langle D\phi, \nu \rangle \\
=&\left(
|h|^2 + \psi''(r)|\nu^{\parallel}|^2 + \psi'(r) \frac{\cs_{\kappa}(r)}{\sn_{\kappa}(r)} |\nu^{\perp}|^2
\right)\phi\\
&+\cs_{\kappa}(r) \psi''(r)|\nu^{\parallel}|^2 + (\psi'''(r) - \kappa \psi'(r)) \sn_{\kappa}(r) |\nu^{\parallel}|^2
-\frac{\cs_{\kappa}(r)^2}{\sn_{\kappa}(r)} \psi'(r)|\nu^{\parallel}|^2
- H_{\rho}^{\Sigma_t} \langle D\phi, \nu \rangle.
\end{align*}

Hence, the generalized trace inequality Lemma \ref{Lemma:TraceInequality-Generalized}, the monotonicity of the weight $\psi$, and the assumption $A_{\kappa} \geq 0$
given by equation \eqref{equation:Monotonicity} imply that
\begin{align*}
    \partial_t H_{\rho}^{\Sigma_t}
    \geq& \cs_{\kappa}(r)(H_{\rho}^{\Sigma_t})^2 
    + \left(
    \psi''(r)(n\cs_{\kappa}(r)+\psi'(r)\sn_{\kappa}(r))-n\psi'(r)\frac{\cs_{\kappa}(r)^2}{\sn_{\kappa}(r)}
    \right)|\nu^{\parallel}|^2\\
    &+ \cs_{\kappa}(r) \psi''(r) |\nu^{\parallel}|^2
    + (\psi'''(r)- \kappa \psi'(r))\sn_{\kappa}(r)|\nu^{\parallel}|^2
    - \frac{\cs_{\kappa}(r)^2}{\sn_{\kappa}(r)} \psi'(r) |\nu^{\parallel}|^2
    - H_{\rho}^{\Sigma_t} \langle D\phi, \nu \rangle \\
    &= \cs_{\kappa}(r)(H_{\rho}^{\Sigma_t})^2 
    + \left(
    \psi''(r)(n+1)\cs_{\kappa}(r)+\psi'(r)\psi''(r)\sn_\kappa(r)-(n+1)\psi'(r)\frac{\cs_{\kappa}(r)^2}{\sn_{\kappa}(r)}
    \right)|\nu^{\parallel}|^2\\
    &
    + (\psi'''(r)- \kappa \psi'(r))\sn_{\kappa}(r)|\nu^{\parallel}|^2
    - H_{\rho}^{\Sigma_t} \langle D\phi, \nu \rangle.
   \end{align*}
   We collect terms proportional to $\nu^{\parallel}$:
   \begin{align*}
    \partial_t H_{\rho}^{\Sigma_t}
    \geq& 
    \cs_{\kappa}(r)(H_{\rho}^{\Sigma_t})^2
    - H_{\rho}^{\Sigma_t} \langle D\phi, \nu \rangle \\
    &+\left(
    (n+1)\cs_{\kappa}(r)\sn_{\kappa}(r)
    \left(
    \frac{\psi'(r)}{\sn_{\kappa}(r)}\right)'
    -\kappa \psi'(r)\sn_{\kappa}(r) + (\psi''(r)\psi'(r)+\psi'''(r)) \sn_{\kappa}(r)
    \right)|\nu^{\parallel}|^2\\
    &=\cs_{\kappa}(r) (H_{\rho}^{\Sigma_t})^2
    - H_{\rho}^{\Sigma_t} \langle D\phi, \nu \rangle \\
    &+\sn_{\kappa}(r)
    \left(
    \psi''(r) + \frac{(\psi'(r))^2}{2} + (n+1) \frac{\cs_{\kappa}(r)}{\sn(r)} \psi'(r)
    +\kappa n\psi(r)\right)'\\
    &\geq \cs_{\kappa}(r) (H_{\rho}^{\Sigma_t})^2
    -H_{\rho}^{\Sigma_t} \langle D\phi, \nu \rangle.
\end{align*}

Note that in the application of the generalized trace inequality Lemma \ref{Lemma:TraceInequality-Generalized} we discared the first three nonnegative terms on the right-hand side.

Once we obtain the above inequality, the argument works as in the Euclidean case.
We define the Heintze--Karcher type inequality
\begin{align*}
    Q(t) = \int_{\Sigma_t} \frac{n\cs_{\kappa}(r) + \psi'(r) \sn_{\kappa} (r)}
    {H_{\rho}^{\Sigma_t}}\,d\sigma_{\rho}^{\Sigma_t}.
\end{align*}
Then one can check that 
\begin{align*}
    \limsup_{h\to 0} \frac{Q(t)-Q(t-h)}{h}\leq-\int_{\Sigma_t}((n+1)\cs(r)+\psi'(r)\sn(r))\phi\ \; d\sigma_{\rho}^{\Sigma_t}.
\end{align*}

The conclusion follows by integration in the variable $t \in [0,T)$
followed by an application of the coarea formula. 
\end{proof}

 \section{Log--convexity of the weight and the differential inequality}
 
 In this appendix we show that $A_0 \geq 0$ implies log-convexity of the weight 
 
 We define the auxilliary function $B_0: (0,\infty) \rightarrow \R$ by 
 \begin{align*}
  B(r) = \psi''(r) + \frac{(\psi'(r))^2}{2} + (n+1) \frac{\psi'(r)}{r}.
 \end{align*}
 
 Observe that 
 \begin{align*}
   \frac{d}{dr} \left( r^{n+1} \exp \left( \frac{\psi(r)}{2} \right) \psi'(r) \right) = r^{n+1} \exp \left(\frac{\psi(r)}{2} \right) B(r).
 \end{align*}

 Since $A_0(r) = B'(r) \geq 0$ by assumption we may use the monotonicity
 assumption $\psi'(r) \geq 0$ and the regularity assumption $\psi'(0) = 0$
 to deduce
 \begin{align*}
  r^{n+1} \exp\left(\frac{\psi}{2} \right) \psi'(r)
  &=
  \int_0^r s^{n+1} \exp \left( \frac{\psi(r)}{2} \right) B(s) \, ds \\
  &\leq B(r) \int_0^r \exp \left( \frac{\psi(r)}{2} \right) s^{n+1} \, ds 
  \leq \frac{r}{n+2} r^{n+1} \exp \left( \frac{\psi(r)}{2} \right) B(r).
 \end{align*}
 This implies that $(n+2) \frac{\psi'(r)}{r} \leq B(r)$ or equivalently
 \begin{align*}
  (n+2) \frac{\psi'(r)}{r} \leq \psi''(r) + \frac{(\psi'(r))^2}{2} + (n+1) \frac{\psi'(r)}{r}.
 \end{align*}
 Rearranging yields
 \begin{align*}
  \psi''(r) \geq \frac{\psi'(r)}{r} - \frac{(\psi'(r))^2}{2}.
 \end{align*}
  Thus we have $\psi''(r) \geq 0$ whenever $\psi'(r) \leq \frac{2}{r}$, in particular this holds
  for small values of $r \in (0,\infty)$.
  
  We now proceed by contradiction. Assume there exists $r_0 \geq 0$, such that $\psi''(r_0) < 0$.
  Let $(a,b) \subseteq [0, +\infty]$ be the maximal interval, such that $r_0 \in (a,b)$
  and $\psi''|_{(a,b)} < 0$. By the previous observation we have $a > 0$, $b \in (a, +\infty]$
  and $\psi''(a) = 0$ by maximality.
  
  We define another auxilliary function $D(r) = r^{-(n+1)} \frac{d}{dr} (r^{n+1} \psi'(r))$;
  hence 
  \[
   B(r) = D(r) + \frac{(\psi'(r))^2}{2}.
  \]
 Differentiation yields for any $r \in (a,b)$ that 
 \[
  D'(r) = B'(r) - \psi'(r) \psi''(r) > 0,
 \]
 since $A_0 = B'(r) \geq 0$ and $\psi'(r) > 0$ while $\psi''(r) < 0$ on $(a,b)$.

 For $s \in (a,b)$ we obtain by the Fundamental Theorem of Calculus,
 integration by parts, and monotonicity of $r \mapsto D(r)$ in $(a,b)$ that 
 \begin{align*}
  s^{n+1} \psi'(s) - a^{n+1} \psi'(a)
  &=
  \int_a^s \xi^{n+1} D(\xi) \; d\xi
  =
  \frac{s^{n+2}}{n+2} D(s) - \frac{a^{n+2}}{n+2} D(a)
  - \int_a^s \frac{\xi^{n+2}}{n+2} D'(\xi) \, d\xi \\
  &< \frac{s^{n+2}}{n+2} D(s) - \frac{a^{n+2}}{n+2} D(a) \\
  &=  \frac{s^{n+2}}{n+2} \psi''(s)
  +
  s^{n+1} \frac{n+1}{n+2} \psi'(s)
  - 
  a^{n+1} \frac{n+1}{n+2} \psi'(a).
 \end{align*}

We rearrange the above identity to obtain
\begin{equation}
 s^{n+1} \psi'(s) - a^{n+1} \psi'(a) < s^{n+2} \psi''(s) < 0.
 \label{equation:Auxilliary}
\end{equation}
Dividing by $s > 0$ and sending $s \rightarrow 0$ yields
\begin{align*}
 \left. \frac{d}{dr} (r^{n+1} \psi'(r)) \right|_{r = a} \leq 0.
\end{align*}
By maximality we have $\psi''(a) = 0$ and hence
\begin{align*}
 0 \geq a^{n+1} \psi''(a) + (n+1) a^n \psi'(a) = (n+1) a^n \psi'(a).
\end{align*}
Since $\psi' \geq 0$ we deduce $\psi'(a) = 0$. Then equation \eqref{equation:Auxilliary}
yields a contradiction.

\end{document}